\documentclass[12pt]{article}
\usepackage{amsmath, amsthm, amsfonts, mathtools, xcolor, caption, thmtools}

\numberwithin{equation}{section}

\usepackage[style=numeric,sorting=none,giveninits=true]{biblatex}
\usepackage{etoolbox}
\usepackage[margin=1in]{geometry}

\DeclareMathAlphabet{\mymathbb}{U}{BOONDOX-ds}{m}{n}

\usepackage[colorlinks, allcolors=blue, hypertexnames=false]{hyperref}
\usepackage[noabbrev, nameinlink]{cleveref}

\usepackage{comment}

\newtheorem{theorem}{Theorem}[section]
\newtheorem{lemma}[theorem]{Lemma}
\newtheorem{corollary}[theorem]{Corollary}
\newtheorem{proposition}[theorem]{Proposition}
\theoremstyle{definition}
\newtheorem{remark}{Remark}
\newtheorem{definition}{Definition}
\newtheorem{example}{Example}

\newcommand{\ev}[1]{( #1 )} 
\newcommand{\ew}[1]{\left( #1 \right)}

\newcommand{\C}{\mathbb{C}}
\newcommand{\Q}{\mathbb{Q}}

\begin{document}

\title{A general switching method for constructing E-cospectral hypergraphs}

\author{
Aida Abiad
\thanks{\texttt{a.abiad.monge@tue.nl}, Department of Mathematics and Computer Science, Eindhoven University of Technology, 5600 MB Eindhoven, The Netherlands}
\thanks{Department of Mathematics and Data Science, Vrije Universiteit Brussel, 1050 Brussels, Belgium} 
\and
Joshua Cooper \thanks{\texttt{cooper@math.sc.edu}, Department of Mathematics, University of South Carolina, Columbia, SC 29208, U.S.A.} 
\and
Utku Okur
\thanks{\textit{Corresponding author.} \texttt{utku.okur@math.tu-freiberg.de}, Institute of Discrete Mathematics and Algebra, Faculty of Mathematics and Computer Science, Technische Universit\"at Bergakademie Freiberg, 09596 Freiberg, Germany}
}

\date{}
\maketitle

\begin{abstract}
Spectral hypergraph theory studies the structural properties of a hypergraph that can be inferred from the eigenvalues and the eigenvectors of either matrices or tensors associated with it. In this paper we study the spectral indistinguishability in the hypergraph setting.  We present a general switching method to construct uniform $E$-cospectral hypergraphs (hypergraphs with the same $E$-spectrum), and discuss some of its multiple applications. Our method not only provides a framework to unify the existing methods for obtaining $E$-cospectral hypergraphs via switching, but also generalizes most of the existing switching tools, yielding multiple new constructions. Finally, we compare common methods of computing $E$-characteristic polynomials, and in particular show that one standard method, while useful for generic tensors, is uninformative for almost all hypergraphs.

\medskip

\noindent \textbf{Keywords:} hypergraph; adjacency tensor; E-cospectral; switching 

\noindent \textbf{MSC classification:} 05C65, 15A18, 15A69
\end{abstract}


\section{Introduction}

Spectral hypergraph theory seeks to deduce structural properties about the hypergraph using its spectra. This field has received a lot of attention in the last two decades, see for example \cite{C2012,CD2015,FL1996,LZ2017,R2002,SSP2022,shao,SQH2015,W2022,XW2019,ZKSB2017}. Several directions for future research on higher-order networks are proposed in \cite{Abiad_2026}, including the construction of cospectral hypergraphs. The study of cospectral hypergraphs is important since it reveals which hypergraph properties cannot be deduced from their spectra. While the construction of cospectral graphs has
been investigated extensively in the literature (see e.g. \cite{ah2012, gm1982,HH1988,QJW2020,s1973}), much less is known about the construction of cospectral hypergraphs.

Two $k$-uniform hypergraphs are said to be \emph{cospectral} (\emph{$E$-cospectral}), if their adjacency tensors have the same characteristic polynomial ($E$-characteristic polynomial). A $k$-uniform hypergraph $H$ is said to be \emph{determined by its spectrum}, if there is no non-isomorphic $k$-uniform hypergraph cospectral with $H$. A way to show that two hypergraphs are not determined by their spectra is to construct cospectral pairs. One way to do so is using switching methods, which are a local hypergraph operation that results in \emph{$E$-cospectral hypergraphs} (hypergraphs with the same $E$-spectrum). In this direction, Bu, Zhou, and Wei \cite{bzw2014} presented a switching method for constructing $E$-cospectral hypergraphs which is based on the celebrated Godsil-McKay switching (GM-switching) for graphs, and they also showed that some basic hypergraph classes are determined by their spectra. Abiad and Khramova \cite{AK2024} extended the more recent graph switching method by Wang, Qiu and Hu (WQH-switching) to the hypergraph setting.

In this paper, we provide a general method for constructing $E$-cospectral uniform hypergraphs via switching. This general framework, which uses the language of tensors, subsumes and unifies the known switching methods for obtaining $E$-cospectral hypergraphs \cite{bzw2014,AK2024}, and also yields multiple new switching methods, which in turn generalize multiple results for graphs \cite{abiad_haemers,mao_wang_liu_qiu_2023,ABS2025}. Along the way, we also investigate the relationship between differing definitions of $E$-eigenvalues appearing the literature, especially regarding \cite{Q2007} and \cite{CARTWRIGHT2013942}. In fact, the situation is a bit complicated: Cartwright and Sturmfels define {\em normalized $E$-eigenvalues} to be the same as what Qi defines to be $E$-eigenvalues; the former give a method to compute (normalized) $E$-eigenvalues via elimination ideals, whereas the latter gives a similar method (via resultants) that he shows gives the correct values if the tensor is what he terms ``regular''; however, we show below that for $k\geq 3$, the adjacency hypermatrix of a $k$-uniform hypergraph is regular if and only if the hypergraph is a disjoint union of complete hypergraphs with at most one isolated vertex for $k \geq 3$, but almost all $2$-graphs are indeed regular.

\section{Preliminaries}


Throughout, we shall mostly follow the notation used in \cite{bzw2014}.

For a positive integer $n$, let $[n] = \{1, \ldots , n\}$. For disjoint sets $A_1$ and $A_2$, we write $A_1 \sqcup A_2$ for the disjoint union. We write $\mymathbb{0}_n$, $\mymathbb{1}_n$, $I_n$ and $J_n$ for the all-zeroes vector, all-ones vector, the identity matrix and the all-ones matrix of dimension $n$, respectively. We omit the subscript when the dimension is clear from context. Given two square matrices $A,B$, we write $A\oplus B$ for the square matrix $\begin{bmatrix}
A & 0 \\ 0 & B
\end{bmatrix}$. For a square matrix $A$, the \textit{permanent} of $A$ is denoted by $\textup{per}\ev{A}$. Whenever it is clear from context, we omit curly brackets of a set, e.g., an edge of a hypergraph. 

An order $k$ dimension $n$ \emph{tensor} (also variously called a \emph{hypermatrix} or \emph{multidimensional array}) $\mathcal{A}=(a_{i_1, \ldots, i_k}) \in \mathbb{C}^{n\times \cdots \times n}$ is a multidimensional array with $n^k$ entries, where $i_j \in [n]$, $j=1,\ldots, k$. For example, in case $k=1$, $\mathcal{A}$ is a column vector of dimension $n$, and in case $k=2$, $\mathcal{A}$ is an $n\times n$ square matrix.

A $k$-\emph{uniform hypergraph} is a pair $\ev{V,E}$, where $V$ is a finite set, called the \textit{vertex set}, and $E \subseteq \binom{V}{k}$ is a set of $k$-subsets of $V$, called the \textit{edge set}. Given a subset $C\subseteq V$, then the hypergraph \textit{induced by $C$} is $\ev{C,\{e\in E\ev{G}: e\subseteq C\}}$.



We say that a tensor $\mathcal{A}$ is \emph{symmetric} if $a_{i_1, \ldots, i_k} = a_{i_{\sigma{(1)}}\,i_{\sigma{(2)}}\cdots i_{\sigma{(k)}}}$ for any permutation $\sigma$ on $[k]$. We denote by $\mathcal{I}^k_n$ the \emph{identity tensor} of order $k$ and dimension $n$, \emph{i.e.} a tensor of elements $a_{i_1, \ldots, i_k}$ such that
$$a_{i_1, \ldots i_k}=\begin{cases}
1 & \text{if } i_1=i_2=\ldots =i_k,\\
0 & \text{otherwise}.
\end{cases}$$

The \emph{adjacency tensor} of $G$, denoted by $\mathcal{A}_G$, is an order $k$ dimension $|V(G)|$ tensor with entries \cite{C2012}:
$$ a_{i_1\, \ldots, i_{k}}=\begin{cases}
1 & \text{if } \{i_1, \ldots, i_k \}\in E(G), \\
0 & \text{otherwise.}
\end{cases}$$
Note that the order $k$ of the adjacency tensor corresponds to the rank $k$ of the hypergraph.

\begin{remark}\leavevmode
\label{rmk:symmetry_normalizing} 
\begin{enumerate}
\item[\textit{i)}] From the definition of the adjacency tensor, it is easy to observe that for a hypergraph $G$, the tensor $\mathcal{A}_G$ is symmetric.
\item[\textit{ii)}] The adjacency tensor is sometimes defined with a scaling factor $\frac1{\ev{k-1}!}$ in each entry. For the equations we are concerned with, this scalar cancels out, so the theory applies regardless of whether this factor is included in the definition of $\mathcal{A}_G$. 
\end{enumerate}
\end{remark}

The \textit{generic tensor} $\mathbb{A}^n_k$ of dimension of dimension $n$ and order $k\geq 2$ is the tensor such that the $j_1,\ldots,j_k$ entry is the variable $\mathbf{a}\ev{j_1,\ldots,j_k} $. The entries of the generic tensor belong to the ring $\C[\{\mathbf{a}\ev{j_1,\ldots,j_k}\}]$. 

The following tensor multiplication was introduced by Shao (\cite[Definition 1.1, p.~2352]{shao}) as a generalization of the matrix multiplication.

\begin{definition}\cite{shao}\label{def:tensorproduct}
Let $\mathcal{A}$ and $\mathcal{B}$ be order $m\geq 2$ and order $k\geq 1$, dimension $n$ tensors, respectively. The product $\mathcal{A} \mathcal{B}$ is the following tensor $\mathcal{C}$ of order $(m-1)(k-1)+1$ and dimension $n$ with entries:
$$c_{i_1\,\alpha_1, \ldots, \alpha_{m-1}} = \sum_{i_2,\ldots, i_m\in [n]} a_{i_1,\ldots, i_{m}} b_{i_2\,\alpha_{1}} \cdots b_{i_m\,\alpha_{m-1}},$$
where $i_1\in [n]$, $\alpha_1,\ldots,\alpha_{m-1} \in [n]^{k-1}$.
\end{definition}

In particular, according to \cite[Example 1.1]{shao}, for an order $k\geq 2$ dimension $n$ tensor $\mathcal{A}$ and a vector $x = (x_1,\ldots, x_n)^\top$ we can derive that the product $\mathcal{A} x$ is a vector with $i$-th component calculated by
\begin{equation}
\label{eqn:tensor_times_vect}
(\mathcal{A} x)_i=\sum_{i_2,\ldots, i_k\in [n]} a_{i\, i_2, \ldots, i_{k}} x_{i_2} \cdots x_{i_k}.
\end{equation}

In 2005, Qi \cite{Q2005} and Lim \cite{L2005} independently introduced the concept of tensor eigenvalues with two different definitions. Both of them generalize the notion of matrix eigenvalue in their own way. Since then, a vast number of authors have used such definitions to study spectral properties of hypergraphs \cite{C2012,CD2015,shao,SQH2015,W2022,XW2019,ZKSB2017}. The present manuscript is concerned with the $E$-eigenvalue equations introduced by \cite{Q2005}, which we define next. 

Let $\mathcal{A}$ be an order $k$ dimension $n$ tensor. A number $\lambda \in \mathbb{C}$ is called a \emph{$E$-eigenvalue} of $\mathcal{A}$ if there exists a nonzero vector $x \in \mathbb{C}^n$ such that the system of equations $\mathcal{A}x - \lambda x = 0$ is satisfied. Note that given an eigenpair $\ev{ \lambda, x}$, and a non-zero constant $c\in \C\setminus \{0\}$, then $\ev{ c^{k-2}\lambda, cx}$ is also an eigenpair, in which case, we say that the eigenpairs $\ev{ \lambda, x}$ and $\ev{ c^{k-2}\lambda, cx}$ are \textit{equivalent}. The following normalized system of $n+1$ equations picks out a single representative from each equivalence class:
\begin{equation}
\label{eqn:e_char_res} 
\begin{split}
\mathcal{A}x - \lambda x = 0, \\ 
x^\top x - 1 = 0.
\end{split}
\end{equation}
A complex number $\lambda$ is a \textit{normalized $E$-eigenvalue} of $\mathcal{A}$, if it satisfies the system (\ref{eqn:e_char_res}). For a hypergraph $G$ of rank $k\geq 2$, the $E$-eigenvalues of $G$ are defined as the $E$-eigenvalues of the scaled adjacency tensor $\frac{1}{\ev{k-1}!} \mathcal{A}_G$. 


We consider alternative definitions of the E-characteristic polynomial of a tensor in \Cref{sec:regular_implies_complete}. Sagemath code for the calculation of the E-characteristic polynomial of small examples can be found in \cite{github}. 

We say that two tensors $\mathcal{A},\mathcal{A}'$ are \emph{$E$-cospectral} if they have the same set of normalized $E$-eigenvalues. Furthermore, two hypergraphs $G,H$ are \emph{$E$-cospectral}, if their adjacency tensor are E-cospectral. We now introduce the tool that allows the discovery of cospectral hypergraphs. The following lemma can be obtained from \cite[Eq. (2.1)]{shao}.

\begin{lemma}\label{lem:l.count} Let $\mathcal{A}=(a_{i_1,\ldots, i_k})$ be an order $k\geq 2$ dimension $n$ tensor, and let $Q=(q_{i\,j})$ be an $n\times n$ square matrix. Then
$$(Q\mathcal{A}Q^\top)_{i_1\cdots i_k} = \sum\limits_{j_1,\dots,j_k\in[n]} a_{j_1\cdots j_k}q_{i_1\,j_1}q_{i_2\,j_2}\cdots q_{i_k\,j_k}.$$
\end{lemma}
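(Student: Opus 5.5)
The plan is to compute $Q\mathcal{A}Q^\top$ directly from Shao's product in \Cref{def:tensorproduct}, treating the two multiplications separately and then combining them. Here $Q$ is viewed as an order $2$ tensor, so $m=2$ for the left product, and $Q^\top$ is the order $2$ tensor with entries $(Q^\top)_{j\,i}=q_{i\,j}$. By associativity of Shao's product (\cite{shao}), $Q\mathcal{A}Q^\top$ is unambiguous. I will compute it as $(Q\mathcal{A})Q^\top$, which keeps all indices explicit.

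\textbf{Step 1 (left multiplication).} Apply \Cref{def:tensorproduct} with first factor $Q$ (order $m=2$) and second factor $\mathcal{A}$ (order $k$). The result has order $(2-1)(k-1)+1=k$, with entries
$$(Q\mathcal{A})_{i_1\,j_2\cdots j_k}=\sum_{j_1\in[n]} q_{i_1\,j_1}\,a_{j_1\,j_2\cdots j_k}.$$

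\textbf{Step 2 (right multiplication).} Apply the definition again, now with first factor $\mathcal{B}:=Q\mathcal{A}$ of order $m=k$ and second factor $Q^\top$ of order $2$. The result has order $(k-1)(2-1)+1=k$. Each multi-index $\alpha_t$ lies in $[n]^{1}$, so it is a single index $i_{t+1}$. This gives
$$(\mathcal{B}Q^\top)_{i_1\,i_2\cdots i_k}=\sum_{j_2,\ldots,j_k\in[n]} b_{i_1\,j_2\cdots j_k}\,(Q^\top)_{j_2\,i_2}\cdots(Q^\top)_{j_k\,i_k}=\sum_{j_2,\ldots,j_k} b_{i_1\,j_2\cdots j_k}\,q_{i_2\,j_2}\cdots q_{i_k\,j_k}.$$

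\textbf{Step 3 (combine).} Substitute the formula from Step 1 into Step 2 and merge the sums over $j_1$ and over $j_2,\ldots,j_k$. This yields exactly the claimed expression $\sum_{j_1,\dots,j_k} a_{j_1\cdots j_k}\,q_{i_1\,j_1}\cdots q_{i_k\,j_k}$.

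\textbf{Main obstacle.} The only delicate point is index bookkeeping in Shao's definition: the summed indices of the first factor are paired with the first index of each copy of the second factor. One must therefore check that the transpose places $q_{i_t\,j_t}$, and not $q_{j_t\,i_t}$, in the final formula; Step 2 shows that it does.

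If associativity is not taken as given, I would also compute $Q(\mathcal{A}Q^\top)$. By the same two applications of the definition it gives the same sum, so the bracketing does not matter.
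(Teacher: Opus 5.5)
Your proof is correct: expanding \Cref{def:tensorproduct} once for $Q\mathcal{A}$ and once for $(Q\mathcal{A})Q^\top$ gives exactly the stated sum, and you correctly turn $(Q^\top)_{j_t\,i_t}$ into $q_{i_t\,j_t}$ (the bracketing is justified by associativity from Shao, or by your check of the other bracketing). This is essentially the paper's route: the paper gives no separate argument and simply cites Shao's Eq.~(2.1), which amounts to this same direct expansion of the product definition.
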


From Lemma~\ref{lem:l.count} we obtain the following:
\begin{lemma}\label{l.sym}
Let $\mathcal{A}'=Q\mathcal{A}Q^\top$, where $\mathcal{A}$ is a tensor of dimension $n$ and $Q$ is an $n\times n$ matrix. If $\mathcal{A}$ is symmetric, then $\mathcal{A}'$ is symmetric.
\end{lemma}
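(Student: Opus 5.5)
The plan is to apply Lemma~\ref{lem:l.count} directly and reindex the sum. It suffices to check invariance under an arbitrary permutation of indices, since every permutation of positions is generated by such reindexing. Fix indices $i_1,\ldots,i_k\in[n]$ and a permutation $\sigma$ of $[k]$. We need to show
\[
(Q\mathcal{A}Q^\top)_{i_{\sigma(1)}\cdots i_{\sigma(k)}}=(Q\mathcal{A}Q^\top)_{i_1\cdots i_k}.
\]

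By Lemma~\ref{lem:l.count}, the left-hand side equals
\[
\sum_{j_1,\ldots,j_k\in[n]} a_{j_1\cdots j_k}\,q_{i_{\sigma(1)}j_1}\cdots q_{i_{\sigma(k)}j_k}.
\]
Substitute $j_t=l_{\sigma(t)}$ for $t\in[k]$. This is a bijection of $[n]^k$ onto itself, because $(l_1,\ldots,l_k)$ ranges over $[n]^k$ exactly when $(j_1,\ldots,j_k)$ does. Under this substitution the product of $q$-entries becomes $\prod_{t} q_{i_{\sigma(t)}l_{\sigma(t)}}$. Since multiplication is commutative, this equals $\prod_{s} q_{i_s l_s}$. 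The tensor entry becomes $a_{l_{\sigma(1)}\cdots l_{\sigma(k)}}$, which equals $a_{l_1\cdots l_k}$ because $\mathcal{A}$ is symmetric.

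The sum is therefore $\sum_{l} a_{l_1\cdots l_k}q_{i_1l_1}\cdots q_{i_kl_k}$, and by Lemma~\ref{lem:l.count} again this is $(Q\mathcal{A}Q^\top)_{i_1\cdots i_k}$.

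There is no real obstacle here. The only step needing care is the reindexing: the same permutation $\sigma$ has to act simultaneously on the summation variables and on the positions of the $q$-factors, so that the product can be reordered back into its original form.
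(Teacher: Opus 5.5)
Your proof is correct and follows the paper's route: the paper states this lemma as an immediate consequence of Lemma~\ref{lem:l.count} without writing out details. Your reindexing $j_t = l_{\sigma(t)}$, combined with the symmetry of $\mathcal{A}$ and commutativity of the product of $q$-entries, supplies exactly the omitted step.
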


Additionally, let $Q$ be a real orthogonal matrix. In \cite{shao}, Shao pointed out that $\mathcal{A}$ and $\mathcal{A}'=Q\mathcal{A}Q^\top$ are orthogonally similar tensors as defined by Qi \cite{Q2005}, which implies that they have the same set of normalized $E$-eigenvalues. 

\begin{lemma}\label{l.ortho} Let $\mathcal{A}' = Q\mathcal{A}Q^\top$, where $\mathcal{A}$ is a tensor of dimension $n$ and $Q$ is an $n\times n$ real orthogonal matrix. Then $\mathcal{A}$ and $\mathcal{A}'$ are E-cospectral.
\end{lemma}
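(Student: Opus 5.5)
The plan is to show directly that the normalized eigenpairs of $\mathcal{A}$ and $\mathcal{A}'$ correspond under $x\mapsto Qx$. The first step is to compute $\mathcal{A}'y$ for a vector $y$. By \Cref{lem:l.count} and \eqref{eqn:tensor_times_vect},
\[
(\mathcal{A}'y)_{i} = \sum_{i_2,\ldots,i_k}\sum_{j_1,\ldots,j_k} a_{j_1\cdots j_k} q_{i\,j_1} q_{i_2 j_2}\cdots q_{i_k j_k} y_{i_2}\cdots y_{i_k}.
\]
Summing over $i_2,\ldots,i_k$ first turns each factor $\sum_{i_t} q_{i_t j_t} y_{i_t}$ into $(Q^\top y)_{j_t}$. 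This gives the identity
\[
\mathcal{A}'y = Q\,\bigl(\mathcal{A}(Q^\top y)\bigr).
\]

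Next, suppose $(\lambda,x)$ solves \eqref{eqn:e_char_res} for $\mathcal{A}$, and set $y=Qx$. Orthogonality gives $Q^\top y = x$, so
\[
\mathcal{A}'y = Q\mathcal{A}x = \lambda Qx = \lambda y.
\]
For the normalization, $y^\top y = x^\top Q^\top Q x = x^\top x = 1$. Here it matters that $Q$ is real orthogonal, so that $Q^\top Q = I$. The normalization is the bilinear form $x^\top x$, not the Hermitian one, and it is preserved by exactly this identity. Thus every normalized $E$-eigenvalue of $\mathcal{A}$ is one of $\mathcal{A}'$.

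For the converse, note that $\mathcal{A} = Q^\top \mathcal{A}' Q$. This follows from the same entrywise formula together with $Q^\top Q = I$, since $\sum_{i} q_{i\,a} q_{i\,b} = \delta_{ab}$ collapses each index. Moreover, $Q^\top$ is also real orthogonal, so the same argument with the roles swapped gives the reverse inclusion. The two sets of normalized $E$-eigenvalues therefore coincide.

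The only real obstacle is the index bookkeeping in the first step. I must check that the contraction in \Cref{lem:l.count} uses $q_{i_t j_t}$ in the orientation that yields $Q^\top y$ rather than $Qy$. If the orientation turned out reversed, the fix is simply to use $y = Q^\top x$ instead, and the rest of the argument is unchanged.
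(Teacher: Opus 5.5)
Your proof is correct. The orientation worry at the end is already settled by your own computation: $\sum_{i_t} q_{i_t j_t} y_{i_t} = (Q^\top y)_{j_t}$, so the identity $\mathcal{A}'y = Q\,\mathcal{A}(Q^\top y)$ holds exactly as you wrote it.

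The paper does not carry out any verification here. It invokes Shao's remark that $Q\mathcal{A}Q^\top$ is an orthogonal similarity in Qi's sense, together with Qi's result that orthogonally similar tensors have the same normalized $E$-eigenvalues. Your argument is essentially the self-contained content behind that citation. The change of variables $y=Qx$ transports eigenpairs, and $Q^\top Q=I$ preserves the bilinear normalization $x^\top x=1$ for complex $x$.

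The citation route is shorter and places the lemma inside Qi's general framework. Yours shows exactly which property of $Q$ is used, namely only $Q^\top Q = I$. So it works verbatim for complex orthogonal $Q$, but not for unitary $Q$.

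A minor simplification: you do not need the separate identity $\mathcal{A} = Q^\top \mathcal{A}' Q$ for the reverse inclusion. Given a normalized eigenpair $(\lambda,y)$ of $\mathcal{A}'$, set $x = Q^\top y$ and apply $Q^\top$ to $\mathcal{A}'y = Q\,\mathcal{A}(Q^\top y)$. This gives $\mathcal{A}x = \lambda x$, and $x^\top x = y^\top Q Q^\top y = y^\top y = 1$.
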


\Cref{l.ortho} will be the key ingredient for proving the $E$-cospectrality of the hypergraphs constructed using the method described in Section \ref{sec:Ecospectralityswitching}.

\section{Indecomposable regular orthogonal matrices}
\label{sec:indecomp}
A rational orthogonal matrix $Q$ is \textit{regular} if it has constant row sum, that is, $Q\mymathbb{1} = r \mymathbb{1}$, for some $r \in \Q$. A regular orthogonal matrix $Q$ has level $\ell$ if $\ell$ is the smallest positive integer such that $\ell Q$ is an integral matrix.

A square matrix $A$ is \textit{decomposable} (\cite{wang_xu_2006,wang_xu_2010, brualdi_ryser_1991}), if there are permutation matrices $P_1, P_2$ such that 
$$
P_1 A P_2 = \begin{bmatrix} M_1 & M_2 \\ O & M_3 \end{bmatrix}
$$ 
where $M_1,M_3$ are square matrices. The matrix $A$ is \textit{indecomposable}, if it is not decomposable. 

Indecomposable regular orthogonal matrices of level $2$ and row sum $1$ are classified up to a permutation of the rows and columns. This result follows from \cite{chan_rodger_seberry} and proven in the form below by Wang and Xu in \cite[Theorem 3.1, p.~66]{wang_xu_2006} (also see \cite[Theorem 2.4, p.~73]{wang_xu_2010}, \cite[Theorem 3, p.~5]{abiad_berg_simoens}).  The focus on level $2$ is due to the observation by Wang-Xu in \cite{wang_xu_2010} that, empirically, most generalized cospectral graphs pairs are related by an orthogonal matrix of level $2$.  For hypergraphs, even such experimental insight is lacking from the literature.


\begin{theorem}[\cite{wang_xu_2006}]
\label{def:all_switching_matrices}
Up to a permutation of the rows and columns, an indecomposable regular orthogonal matrix of level $2$ and row sum $1$ is one of the following: 

\begin{tabular}{ l c }
& \\[1em]
\textit{i)} $\prescript{4}{}{R}_{gm} = \frac{1}{2} \cdot \textup{circulant}(-1,1,1,1)$ & \textit{ii)} $\prescript{n}{}{R}_{sg} := \frac{1}{2} \cdot \textup{circulant}\ev{ J, O, \ldots, O, Y}$ \\[1.5em] 

\textit{iii)} $ R_{f} := \frac{1}{2} \cdot \textup{circulant}(-1,1,1,0,1,0,0) $ & \textit{iv)}  $ R_{c} := \frac{1}{2} \cdot 
\begin{bmatrix} 	
-I & I & I & I \\
I & -Z & I & Z \\
I & Z & -Z & I \\
I & I & Z & -Z 
\end{bmatrix} $  \\[1.5em] 
& \\
\end{tabular}

where $n$ the dimension of the matrix $\prescript{n}{}{R}_{sg} $ ($n$ is assumed even), $Y = 2I_2 - J_2$ and $Z = J_2 - I_2$ (the subcripts correspond to ``Godsil-McKay'', ``sun graph'', ``Fano'' and ``Cube'' respectively).
\end{theorem}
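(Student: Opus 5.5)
Write $Q=\frac12 M$ with $M$ an integral matrix. Since $Q$ has level $2$, orthogonality $MM^\top = 4I$ forces every row of $M$ to have squared norm $4$. So each row of $M$ is either a single entry $\pm 2$ with all other entries $0$, or has exactly four entries $\pm1$ and zeros elsewhere. The row sum condition $Q\mymathbb{1}=\mymathbb{1}$ says each row of $M$ sums to $2$. A row with a single $\pm 2$ would therefore be a standard basis vector. Using column orthogonality, the corresponding column would also be a standard basis vector, which makes $Q$ decomposable (a $1\times 1$ block splits off), unless $n=1$, which is excluded by level $2$. So every row of $M$ has exactly three entries $+1$ and one entry $-1$. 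Because $Q^\top$ is also regular orthogonal ($Q^\top\mymathbb{1}=Q^\top Q\mymathbb{1}=\mymathbb{1}$), the same holds for every column.

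Next I will encode the structure combinatorially. Let $S=(|m_{ij}|)$ be the $0/1$ support matrix; it is the incidence matrix of a $4$-regular bipartite structure. Two distinct rows of $M$ are orthogonal, so their supports meet in an even number of positions: $0$, $2$ or $4$. Moreover the signs on the overlap must cancel. I then split into cases according to the maximum overlap between pairs of rows.
\begin{itemize}
\item[(a)] Two rows share all $4$ positions. Since each row has exactly one $-1$, orthogonality forces the $-1$'s into distinct positions. Using indecomposability, this cluster closes up into a $4\times4$ block, giving $\prescript{4}{}{R}_{gm}$.
\item[(b)] All nonzero overlaps have size $2$. I will consider the graph on rows with adjacency given by overlap, together with the sign patterns. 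Each row's four support columns each carry three further rows, since columns also have four nonzeros. Counting incidences then constrains the configuration to a design-like object.
\end{itemize}

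In case (b) the subcases are governed by how the $-1$ entries sit. If rows pair up into $2\times 2$ blocks of the form $J$ and $Y$, propagating along the chain gives the circulant $\prescript{n}{}{R}_{sg}$. If every pair of rows overlaps in at most one "$2$-block", then each pair of distinct columns lies in at most one common row. Together with the overlap count this forces a $2$-$(7,4,2)$ design, i.e.\ the complement of the Fano plane, giving $R_f$. The remaining configuration, with $2\times 2$ blocks $I$ and $Z$ arranged as the cube, gives $R_c$ with $n=8$.

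The main obstacle is this exhaustive case analysis in (b). One must show the local configuration around a row propagates rigidly, and that only the chain (sun), Fano and cube closures are consistent with indecomposability. I would organise it by fixing row $1$ and its $-1$ position, then enumerating how the $3$ other rows through each support column can intersect row $1$. Parity and sign cancellation prune this to the listed possibilities. In practice I would defer to the cited classification of Wang and Xu, whose argument proceeds along these lines.
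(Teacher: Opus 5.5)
The paper gives no proof of its own here. It quotes Wang and Xu, whose result rests on the Chan--Rodger--Seberry classification; after your reduction, $M=2Q$ is a weighing matrix $W(n,4)$.

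Your reduction is correct: indecomposability excludes entries $\pm2$, every row and column of $M$ has three entries $+1$ and one $-1$, and two row supports meet in $0$, $2$ or $4$ positions with cancelling signs. The problem is the case analysis that is supposed to carry the classification, because it puts the families in the wrong cases.

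\textbf{Case (a) does not yield only $\prescript{4}{}{R}_{gm}$.} In $\prescript{n}{}{R}_{sg}$ the two rows of each block row have the same support. For $n=6$, the first two rows of $2\,\prescript{6}{}{R}_{sg}$ are $(1,1,0,0,1,-1)$ and $(1,1,0,0,-1,1)$. So the sun family belongs to case (a), and placing it under (b) contradicts ``all nonzero overlaps have size $2$''. The correct branching is as follows. Suppose two rows share the support $S$ with their $-1$'s in columns $a\neq b$. Then a third row meeting $S$ does one of three things:
\begin{enumerate}
\item it has support $S$, with its $-1$ in $S\setminus\{a,b\}$;
\item it meets $S$ exactly in $\{a,b\}$, with entries $(1,1)$;
\item it meets $S$ exactly in $S\setminus\{a,b\}$, with entries of opposite signs.
\end{enumerate}
The first option forces the block $J_4-2I_4$. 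Otherwise the $J$ and $Y$ patterns propagate around a cycle and give $\prescript{n}{}{R}_{sg}$. Case (b) should therefore contain only $R_f$ and $R_c$.

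\textbf{Your Fano criterion contradicts its conclusion.} In a $2$-$(7,4,2)$ design every pair of points lies in exactly two blocks. Indeed every two columns of $R_f$ share exactly two rows. So ``each pair of distinct columns lies in at most one common row'' cannot lead to $R_f$.

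The dichotomy you can actually use in case (b) comes from a count. The four columns of a row carry $3$ further nonzeros each, giving $12$ incidences. Every row meeting it uses exactly two of them, so each row meets exactly $6$ others.
\begin{itemize}
\item If all pairs of rows meet, then $n=7$, the supports form the symmetric $(7,4,2)$ design, and a sign analysis gives $R_f$.
\item If some pair is disjoint, you still have to show the configuration closes at $n=8$ as $R_c$.
\end{itemize}

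\textbf{The decisive steps are not carried out.} You do not prove the cube closure, nor the rigidity of the sun chain, and you end by deferring to Wang--Xu. Deferring is what the paper does and is acceptable. But then the sketch should be dropped or corrected as above, since as written it would not produce the stated list.
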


\begin{remark}
\begin{enumerate}
\item In \cite[Equation 7, p.~11]{mao_wang_liu_qiu_2023}, a different matrix is used for sun graph switching: 
$$
\frac{1}{2} \cdot \text{circulant}\ev{ Y, O, \ldots, O, J, O}
$$
This alternative definition is indeed equivalent to the way $\prescript{n}{}{R}_{sg}$ is defined above, up to a permutation of rows and columns. 

\item $R_{f} + R_{f}^{\top} = J - 3I$
\end{enumerate}

\end{remark}


We will also consider the following regular orthogonal matrices, of level possibly higher than 2.
\begin{definition}
\label{def:gm_and_wqh_matrices}
\begin{enumerate}
\item For all even $n\geq 4$, define the matrix
$$
\prescript{n}{}{R}_{gm} := \dfrac{2}{n} \cdot J_n - I_n = \frac{2}{n} \cdot \textup{circulant}( 1 - n/2 , 1, \ldots, 1)
$$
where $n$ is the dimension of the matrix $\prescript{n}{}{R}_{gm}$.

\item For all $p\geq 1$, define 
$$
\prescript{2p}{}{R}_{wqh} := \begin{bmatrix} 
I_p - (1/p) \cdot J_p & (1/p) \cdot J_p \\ 
(1/p) \cdot J_p & I_p - (1/p) \cdot J_p
\end{bmatrix}
$$
\end{enumerate}
where $2p$ is the dimension of the matrix. The subscript corresponds to ``Wang, Qiu and Hu'' (\cite{wang_qui_hu_2019}). 
\end{definition}

Let $Q$ be square matrix. Consider $Q \mathbb{A}^n_k Q^\top$, where the product is the generalized tensor product (q.v.~\Cref{def:tensorproduct}). Assume that the variables satisfy the following symmetry equations, 

\begin{equation}
\label{eqn:symmetry}
\mathbf{a}\ev{j_1,\ldots,j_k} = \mathbf{a}\ev{j_{\sigma\ev{1}},\ldots,j_{\sigma\ev{k}}}
\end{equation}
for any $\sigma\in \mathrm{Sym}\ev{[k]}$ and $j_1,\ldots,j_k$. Then, we can use permanents to express $Q^\top \mathbb{A}^n_k Q$ as follows. 

\begin{align*}
& \ew{  Q \mathbb{A}^n_k Q^\top  }_{i_1,\ldots,i_k} = \sum_{1\leq j_1, \ldots ,j_k \leq n} \mathbf{a}\ev{j_1,\ldots,j_k} q_{i_1,j_1} \cdots q_{i_k,j_k} && \text{ by \Cref{lem:l.count}} \\ 
& = \sum_{ \{ j_1,\ldots,j_k\} \in \binom{ [n] }{k} } \mathbf{a}\ev{j_1,\ldots,j_k} \sum_{ \sigma \in \mathrm{Sym}\ev{[k]} }   q_{ i_1, j_{\sigma\ev{1}} }  \cdots  q_{ i_k,j_{\sigma\ev{k}} } &&\text{ since $\mathbb{A}^n_k$ is symmetric, by (\ref{eqn:symmetry}) } \\ 
& = \sum_{ \{ j_1,\ldots,j_k\} \in \binom{ [n] }{k} } \mathbf{a}\ev{j_1,\ldots,j_k} \mathrm{per}\ew{ Q\big\vert_{ \{ i_1,\ldots,i_k\} \times \{ j_1,\ldots,j_k \} } } 
\end{align*}
where 
$Q\big\vert_{ \{ i_1,\ldots,i_k\} \times \{ j_1,\ldots,j_k \} }  = \ev{ q_{i,j}}_{\ev{ i,j } \in \{ i_1,\ldots,i_k\} \times \{ j_1,\ldots,j_k \} } $ is the $k \times k $ matrix obtained from $Q$ by extraction of the entries with indices $\ev{ i,j } \in \{ i_1,\ldots,i_k\} \times \{ j_1,\ldots,j_k \}$. In particular, we may substitute $Q^\top$ for $Q$ to obtain:
$$
\ew{  Q^\top \mathbb{A}^n_k Q  }_{i_1,\ldots,i_k} = \sum_{ \{ j_1,\ldots,j_k\} \in \binom{ [n] }{k} } \mathbf{a}\ev{j_1,\ldots,j_k} \mathrm{per}\ew{ (Q^{\top})\big\vert_{ \{ i_1,\ldots,i_k\} \times \{ j_1,\ldots,j_k. \} } }. 
$$


\section{Switching operations for \texorpdfstring{$k$}{k}-uniform hypergraphs}
\label{sec:Ecospectralityswitching}

Let $R$ be one of the regular orthogonal matrices given in \Cref{def:all_switching_matrices} and \Cref{def:gm_and_wqh_matrices}, of dimension $s$. Let $Q = R \oplus I_{n-s}$, where $n\geq s$. For the rest of the manuscript, we use the notation $R$ and $Q$ to refer to a fixed choice of these matrices. Let $C = \{ v_1,\ldots,v_s\}$ and $D = \{ v_{s+1},\ldots,v_n\}$ be fixed disjoint sets. The set $C$ is the \textit{switching} set. We will consider hypergraphs $G$ and $H$ on the same vertex set $C\sqcup D$. 

Let $\mathcal{V}\ev{Q}$ be the set of zero-one vectors $\mathbf{v}$ such that $Q^\top \mathbf{v}$ is also a zero-one vector.

For each $k\geq 2$, let $\mathcal{B}^{k}_Q$ be the set of $k$-uniform hypergraphs $G$ such that $Q^\top \cdot \mathcal{A}_G \cdot Q = \mathcal{A}_H $ for some $k$-uniform hypergraph $H$. If $G \in \mathcal{B}^{k}_Q$, then let $t\ev{G}$ be the hypergraph such that $Q^\top \cdot \mathcal{A}_G \cdot Q = \mathcal{A}_{t\ev{G}} $. If the vertex sets of $G$ and $H$ are clear from context, we may sometimes write $t\ev{G}$ to refer to the edge set of the hypergraph in question.

A hypergraph of rank $1$ with vertex set $C$ is defined as $\ev{C,E}$ for some subset $E\subseteq \binom{C}{1}$. Equivalently, given a $1$-graph $G$ on a vertex set $C$, then the edge set of $G$ is a subset of $C$. The adjacency matrix of $G$ is a zero-one vector of length $|C|$. We extend the definition of $\mathcal{B}^{k}_Q$ to $k=1$ by defining $\mathcal{B}^{1}_Q := \mathcal{V}\ev{Q}$. 

We will need pairs $\ev{ G, t\ev{G}}$ such that $G\in \mathcal{B}^{k}_Q$, where $Q$ is one of the matrices defined in \Cref{sec:indecomp}. For $k=1,2$, the hypergraphs $\mathcal{B}^{k}_Q$ have been described in the literature, summarized in \Cref{table:exhaustive_list}. 
\begin{table}[h!]
\centering
\begin{tabular}{ | c | c | c |}
\hline
$Q$ & $k$ & Description of $\mathcal{B}^{k}_Q$ found in: \\[2mm]
\hline
$R_f$ & $1$ & \cite[Lemma 7, p.~9]{abiad_haemers} \\[1mm] 
& $2$  & \cite[Lemma 8, p.~10]{abiad_haemers}  \\ [1mm]
\hline
$R_c$   & $1$ & \cite[Lemma 30, p.~22]{abiad_berg_simoens}\\ [1mm] 
& $2$ & \cite[Table 2, p.~23]{abiad_berg_simoens}\\ [1mm] 
\hline
$\prescript{n}{}{R}_{sg} $  & $1$ & \cite[Lemma 6, p.~7]{abiad_berg_simoens}, \cite[Theorem 6.1, p.~15]{mao_wang_liu_qiu_2023} \\ [1mm] 
& $2$ & \cite[Theorems 5,7,8]{abiad_berg_simoens}, \cite[Theorem 4.1, p.~11]{mao_wang_liu_qiu_2023}  \\ [1mm] 
\hline
$\prescript{n}{}{R}_{gm}$ & $1 \& 2$ & \cite[Theorem 1.1, p.~156]{wang_qui_hu_2019} \\[1mm] \hline
$\prescript{2p}{}{R}_{wqh} $ & $1$ & \cite[Theorem 3.4, p.~163]{wang_qui_hu_2019} \\[1mm]
& $2$  &  \cite[Theorem 3.1, p.~160]{wang_qui_hu_2019} \\ [1mm] 
\hline
\end{tabular}
\caption{References for the list of known graphs (or vectors) $G$ such that $Q^\top \mathcal{A}_G Q$ is a zero-one matrix (or vector). }
\label{table:exhaustive_list}
\end{table}

In the next proposition, we determine $\mathcal{B}^{k}_Q$ for $k=3$ and $Q = R_f$, as well as collecting some previously known cases for reference. We use edge sets to define the hypergraphs in question. (In the case of $\prescript{2p}{}{R}_{wqh}$, we let $C = C_1 \sqcup C_2$, where $|C_i| = p$, for some $p\geq 1$. In the case of $\prescript{n}{}{R}_{sg}$, we assume $C = C_1 \cup \ldots \cup C_{m}$, for an odd integer $m\geq 3$. The definition of $F_1$ and $F_2$ can be found in \Cref{sec:conseqs} below.)

\begin{proposition}
\label{prop:exhaustive_data}
\leavevmode
The pairs $ \{ \ev{ E(G), E(t\ev{G}) }: G \in \mathcal{B}^{k}_Q \}$ are given by:
\begin{enumerate}
\item[(i)] If $k=1$ and $Q = \prescript{n}{}{R}_{gm}$: 

$ \{ (\emptyset,\emptyset), (\binom{C}{1} ,\binom{C}{1}) \} \cup \{ (\binom{X}{1}, \binom{C\setminus X}{1}) :  X \subseteq C \text{ and } |X| = |C|/2 \} $  

\item[(ii)] If $k=1$ and $Q = \prescript{2p}{}{R}_{wqh}$: 

$ \{(\binom{X}{1},\binom{X}{1}): X \subseteq C_1\cup C_2 \text{ and } |X \cap C_1| = |X \cap C_2| \} \cup \{ (\binom{C_1}{1}, \binom{C_2}{1}) \} $

\item[(iii)] If $k=1$ and $Q = R_f$: 

$\{ (\emptyset,\emptyset), ( \binom{C}{1}, \binom{C}{1} ) \} \cup \{ \ev{ \ell_i^1, \mathcal{O}_i^1 }: i=0,\ldots,6 \} \cup \{ \ev{ \binom{C}{1} \setminus \ell_i^1, \binom{C}{1} \setminus \mathcal{O}_i^1 }  : i=0,\ldots,6 \}$

\item[(iv)] If $k=3$ and $Q = R_f$: 

$ \{ (\emptyset,\emptyset), \ev{ F_1 , F_2 } \}$, for $k=3$ and $Q = R_f$. 

\item[(v)] If $k=1$ and $Q = \prescript{n}{}{R}_{sg}$: 

$ \{ \ev{ \binom{X}{1}, \binom{X}{1} } : |X \cap C_i| \equiv 0 \pmod{2},\ \forall i \} \cup \{ \ev{ \binom{X}{1}, \binom{\pi(X)}{1} }: |X\cap C_i| = 1, \forall i \}$

where $C_i = \{ v_{2i-1}, v_{2i} \}$ for each $i=1,\ldots,m$ and $\pi\ev{ v_j } = v_{j-2 \pmod{2m}}$, for each $j=1,\ldots,2m$. ($\pi\ev{v_2} = v_{2m} \in C$.)
\end{enumerate}
\end{proposition}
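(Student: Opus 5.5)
Items (i), (ii), (iii) and (v) are $k=1$ statements and are already in the literature listed in \Cref{table:exhaustive_list}. For them I would only translate the cited results into the present notation. The translation needs one fact: for $k=1$ the ``hypergraph'' $G$ is a zero-one vector $\mathbf{v}$ supported on $C$, and $t(G)$ is the vector $Q^\top\mathbf{v}$. Since $Q=R\oplus I$, the $D$-part is untouched, and it suffices to list the zero-one vectors $\mathbf{v}\in\{0,1\}^s$ with $R^\top\mathbf{v}\in\{0,1\}^s$. As a sanity check I would verify each family directly. For $\prescript{n}{}{R}_{gm}$ we have $R^\top\mathbf{v}=\frac{2|X|}{n}\mathbb{1}-\mathbf{v}$, which is zero-one iff $|X|\in\{0,n/2,n\}$; this gives (i). For $\prescript{2p}{}{R}_{wqh}$ the $C_1$-block of $R^\top\mathbf{v}$ is $\mathbf{v}_1+\frac{1}{p}(|X\cap C_2|-|X\cap C_1|)\mathbb{1}$, and the integrality and zero-one constraints force either equal intersections or the swap $C_1\leftrightarrow C_2$; this gives (ii). Items (iii) and (v) I would quote from \cite{abiad_haemers} and \cite{abiad_berg_simoens,mao_wang_liu_qiu_2023}. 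There $\ell_i^1,\mathcal{O}_i^1$ are the point sets of the lines and ovals (line complements) of the Fano plane indexed as in \Cref{sec:conseqs}, and for (v) the alternating-sign $Y$ block forces each pair $C_i$ to contain an even number of elements of $X$ or exactly one.

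The new content is (iv), $k=3$ and $Q=R_f$. I would first reduce to $C$. Write $\mathcal{A}'=Q^\top\mathcal{A}_GQ$ and use the permanent formula from \Cref{sec:indecomp}. Because $Q$ is the identity on $D$, an entry of $\mathcal{A}'$ indexed by a triple with $j$ points in $C$ depends only on edges with the same $D$-part. So the edges meeting $D$ split into cases by their intersection with $C$, of size $0,1$ or $2$, and these reduce to the $k=1$ and $k=2$ problems for $R_f$: the link vectors and link graphs must lie in $\mathcal{B}^1_{R_f}$ and $\mathcal{B}^2_{R_f}$ respectively. Presumably the statement intends $G$ to have all edges inside $C$, with $F_1,F_2\subseteq\binom{C}{3}$ as in \Cref{sec:conseqs}. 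I would make that explicit, so the task becomes: find all $E\subseteq\binom{C}{3}$, $|C|=7$, such that
\[
\sum_{J\in E}\mathrm{per}\bigl(R_f|_{I\times J}\bigr)\in\{0,1\}\quad\text{for every }I\in\tbinom{C}{3},
\]
and also such that the entries with repeated indices vanish.

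To solve this finite problem, I would use the structure of $R_f=\frac12\,\mathrm{circulant}(-1,1,1,0,1,0,0)$. Each row has support a line complement with one negative entry, so every $3\times3$ permanent lies in $\frac18\{-6,\dots,6\}$ and takes only a few values, determined by how $I$ and $J$ sit relative to the Fano lines. I would tabulate $\mathrm{per}(R_f|_{I\times J})$ by orbit type of the pair $(I,J)$ under the cyclic automorphism $v_i\mapsto v_{i+1}$, which preserves $R_f$. The 35 triples split into 7 lines and 28 non-lines (4 orbits of size 7). Next I would impose the linear constraints. The repeated-index entries (indices $i,i,j$), which must equal $0$, give strong linear conditions, and I expect these to force $E$ to be a union of cyclic orbits, or close to it. It then remains to check the few remaining candidates, of which at most $2^5$ are unions of orbits, against the zero-one condition, and to show that only $\emptyset$ and $F_1$ survive, with $t(F_1)=F_2$. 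The orthogonality of $R_f$ and the fact that $t$ is an involution-type bijection ($t^{-1}$ uses $Q$ in place of $Q^\top$) give consistency.

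The main obstacle is justifying the reduction to orbit unions: the constraint system is not obviously symmetric enough to force cyclic invariance. If that fails, I would fall back to solving the $0/1$ linear system over all $2^{35}$ subsets by a computer search, using the code in \cite{github}. Alternatively I would argue by hand using the $i,i,j$ constraints vertex by vertex: the link of each vertex must behave like a graph in $\mathcal{B}^2_{R_f}$ from \cite[Lemma 8]{abiad_haemers}, and that lemma pins down the possible links.
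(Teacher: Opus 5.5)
Once the speculative parts are set aside, your argument is the paper's. The paper proves the $k=1$ items only by pointing to the references in Table~1, and proves (iv) by an exhaustive Sagemath search over the $3$-uniform hypergraphs on the seven points of $C$, with no structural argument. You leave the orbit-union reduction unjustified, so in your proposal too (iv) rests entirely on that search. However, the search as you set it up is transposed. The entries of $Q^\top\mathcal{A}Q$ are
\[
(R_f^\top\mathcal{A}R_f)_{I}=\sum_{J}a_J\,\mathrm{per}\bigl(R_f|_{J\times I}\bigr),
\]
whereas your $\sum_J a_J\,\mathrm{per}(R_f|_{I\times J})$ is the corresponding entry of $R_f\mathcal{A}R_f^\top$. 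Since $R_f$ is not symmetric, this matters. The row of $v_7$ in $R_f$ is supported on $\{v_7\}\cup\ell_0$, so $(R_f\mathcal{A}_{F_1}R_f^\top)_{v_7v_7v_7}=6\cdot(1/2)^3=3/4$, and your system rejects $F_1$. The reflection $v_i\mapsto v_{-i}$ conjugates $R_f$ into $R_f^\top$ and swaps $F_1$ and $F_2$, so your search would output $(F_2,F_1)$ instead of $(F_1,F_2)$. The same happens already for $k=1$: if $\mathbf{v}$ is the characteristic vector of $\ell_0$, then $R_f^\top\mathbf{v}$ is that of $\mathcal{O}_0$, but $(R_f\mathbf{v})_{v_7}=3/2$.

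Your hand fallback, ``each vertex link must lie in $\mathcal{B}^2_{R_f}$'', is false. The link reduction you did for $D$ works only at vertices where $Q$ acts as the identity. For $v\in C$, the slice of $R_f^\top\mathcal{A}R_f$ at $v$ is $\sum_j (R_f)_{jv}\,R_f^\top L_jR_f$, a combination of all vertex links, not $R_f^\top L_vR_f$. Concretely, the link $L$ of $v_7$ in $F_1$ has edges $v_1v_3$, $v_2v_6$, $v_4v_5$, and $(R_f^\top LR_f)_{v_7v_3}=\tfrac14+\tfrac14=\tfrac12$. So that argument would exclude $F_1$ itself.

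There are also smaller points.
\begin{itemize}
\item Carried out correctly, your check of (ii) also produces the pair $\bigl(\binom{C_2}{1},\binom{C_1}{1}\bigr)$, which the statement omits; the WQH corollary only reaches it through a ``without loss of generality''. You should record this rather than say the check ``gives (ii)''.
\item The $\mathcal{O}_i$ are not line complements. They are the $3$-sets $\pi^i\{v_3,v_5,v_6\}$, i.e.\ the lines of the second Fano plane $F_2$, and $C\setminus\mathcal{O}_i=\ell_i\cup\{v_{i+7}\}$. Quoting (iii) with line complements would give the wrong pairs.
\item In (v) the condition is uniform in $i$: either every $|X\cap C_i|$ is even, or every $|X\cap C_i|$ equals $1$. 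It is not a per-pair alternative.
\end{itemize}
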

\begin{proof}
For $k=1,2$, we refer to \Cref{table:exhaustive_list}. For $k=3$, we used Sagemath and applied exhaustive search to completely determine $\mathcal{B}^{3}_{R_f}$ (c.f.~\cite{github}). 
\end{proof}

\section{Main Switching Theorem}

Let $R$ and $Q$ be square matrices as in the previous section. For a polynomial $f\ev{x}$ and a constant $a$, we write $f\ev{x} [ x = a ]$, for the polynomial obtained by replacing $x$ with $a$. 

Recall that $\mathbb{A}^n_k$ denotes the generic tensor of rank $k$ and dimension $n$. For each hypergraph $G$, let us define the indicator function 
$$\chi\ev{ \{ i_1, \ldots, i_k \} \in E\ev{G} } := \begin{cases} 1 & \text{ if } \{ i_1, \ldots, i_k \} \in E\ev{G}, \\ 0 &\text{ otherwise. } \end{cases}
$$
Then, we have $\ev{ \mathcal{A}_{G} }_{i_1,\ldots,i_k} = \ev{\mathbb{A}^n_k}_{i_1,\ldots,i_k} [ \mathbf{a}\ev{j_1,\ldots,j_k} = \chi\ev{ \{ j_1, \ldots, j_k \} \in E\ev{G} }, \forall j_1,\ldots,j_k ]$.

For two edge sets $X,Y$, let us write 
$$
X\odot Y = \{ e \cup f: e\in X, \ f \in Y \}.
$$
In particular, we have $\emptyset \odot Y = \emptyset = X\odot \emptyset$.  
\begin{theorem}
\label{thm:main_theorem}
Let $G = \ev{V\ev{G},E\ev{G}}$ be a $k$-uniform hypergraph with $k\geq 2$. Let $C=\{ v_1,\ldots,v_s \}$ be a subset of $V\ev{G}$. Let $D:= V\ev{G}\setminus C$. Assume that for all $r\in \{1,2,\ldots,k\}$ and for all $A \in \binom{D}{k-r} $, there is some hypergraph $L_r^A \in \mathcal{B}^{r}_{R}$ induced on the set $C$ ($E\ev{L_r^A}$ can be empty) such that 
\begin{equation}
\label{eqn:to_refer}
\text{$\left\{ e \cup A \in E\ev{G}: e\in \binom{C}{r} \right\} = E\ev{L_r^A} \odot \{A\}$. }
\end{equation}
To construct a $k$-uniform hypergraph $H$, for each $r$ and each $A \in \binom{D}{k-r}$, remove the edges $E\ev{L_r^A} \odot \{A\}$ and add the edges $ E\ev{t\ev{L_r^A}} \odot \{A\}$. Then, $G$ and $H$ are $E$-cospectral.
\end{theorem}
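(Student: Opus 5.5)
The plan is to show that $Q^\top \mathcal{A}_G Q = \mathcal{A}_H$ for $Q = R\oplus I_{n-s}$. Since $R$ is a real (rational) orthogonal matrix, so is $Q$, and \Cref{l.ortho} then gives $E$-cospectrality at once. Everything reduces to an entrywise computation using \Cref{lem:l.count}, or equivalently the permanent expansion from \Cref{sec:indecomp}. Concretely, for an index tuple $(i_1,\ldots,i_k)$ I will compute
$$\left(Q^\top \mathcal{A}_G Q\right)_{i_1,\ldots,i_k}=\sum_{j_1,\ldots,j_k} (\mathcal{A}_G)_{j_1,\ldots,j_k}\, q_{j_1 i_1}\cdots q_{j_k i_k}.$$
Because $Q$ is block diagonal with identity on $D$, each factor $q_{j_t i_t}$ with $i_t\in D$ forces $j_t=i_t$. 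Each factor with $i_t\in C$ forces $j_t\in C$ and contributes the entry $r_{j_t i_t}$ of $R$.

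Next, split the positions into $T_C=\{t: i_t\in C\}$ and $T_D$, and set $r=|T_C|$. There are two cases.

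\textbf{Case $r=0$.} The entry is unchanged. Edges inside $D$ are untouched by the construction, so the entries of $\mathcal{A}_G$ and $\mathcal{A}_H$ agree.

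\textbf{Case $r\ge 1$.} If the $D$-indices $\{i_t:t\in T_D\}$ are not $k-r$ distinct vertices, then every term in the sum has a repeated index. Such terms vanish in both adjacency tensors, so both entries are $0$. Otherwise, let $A$ be that $(k-r)$-set. Then the sum equals
$$\sum_{(j_t)_{t\in T_C}\in C^{r}} \chi\left(\{j_t\}\cup A\in E(G)\right)\prod_{t\in T_C} r_{j_t i_t}.$$
By hypothesis (\ref{eqn:to_refer}), $\chi(\{j_t\}\cup A\in E(G))=(\mathcal{A}_{L_r^A})_{(j_t)}$, where this is the rank-$r$ adjacency tensor on $C$. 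Hence the sum is exactly the entry $\left(R^\top \mathcal{A}_{L_r^A} R\right)_{(i_t)_{t\in T_C}}$, read as $R^\top$ applied to a vector when $r=1$. Since $L_r^A\in\mathcal{B}^r_R$, this equals $(\mathcal{A}_{t(L_r^A)})_{(i_t)}$, and that is precisely $\chi(\{i_t\}_{t\in T_C}\cup A\in E(H))$ by the construction of $H$. The symmetry of the tensors, including \Cref{l.sym} for the transformed tensor, lets me reorder positions so that the $C$-indices come first without loss.

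I expect the main obstacle to be bookkeeping rather than mathematics. First, I must check that $H$ is well defined: the sets $E(L_r^A)\odot\{A\}$ for distinct pairs $(r,A)$ are disjoint, and they partition the edges meeting $C$. This holds because each edge $e$ meeting $C$ determines $r=|e\cap C|$ and $A=e\setminus C$. Second, the case $r=1$ must be matched correctly with the convention $\mathcal{B}^1_R=\mathcal{V}(R)$, where the condition is that $R^\top \mathbf{v}$ is $0$-$1$. Third, there is a scaling subtlety: $E$-eigenvalues use the scaled tensor $\frac{1}{(k-1)!}\mathcal{A}$. By \Cref{rmk:symmetry_normalizing} this factor is harmless, since $Q^\top(c\mathcal{A})Q=c\,Q^\top\mathcal{A}Q$.
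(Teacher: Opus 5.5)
Your proposal is correct and follows essentially the same route as the paper. You establish $Q^\top \mathcal{A}_G Q = \mathcal{A}_H$ entrywise: the block-diagonal form $Q = R\oplus I_{n-s}$ reduces each entry (with $C$-part of size $r$ and $D$-part $A$) to the corresponding entry of $R^\top \mathcal{A}_{L_r^A} R = \mathcal{A}_{t(L_r^A)}$, and \Cref{l.ortho} then gives $E$-cospectrality. The paper phrases this computation through the generic tensor, permanent expansions and substitution; you apply \Cref{lem:l.count} directly to the $0$-$1$ tensors, which also makes explicit the cases the paper's proof passes over ($r=0$, repeated indices, and disjointness of the edge classes indexed by $(r,A)$).
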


\begin{proof}
By the construction of $H$, 
\begin{equation}
\label{eqn:assumption_implies_2}
\text{ $\left\{ e \cup A \in E\ev{H}: e\in \binom{C}{r} \right\} = E\ev{t\ev{L_r^A}} \odot \{A\}$}
\end{equation}
for each $r$ and $A \in \binom{D}{k-r}$. By the definition of $t\ev{L_r^A}$, we know that, for each $A$,
\begin{align*}
& R^\top \mathcal{A}_{L_r^A} R = \mathcal{A}_{t\ev{L_r^A}}.
\end{align*}
Given a fixed $A$, let $r:= k-|A|$. We can see that
\begin{align*}
\ev{ R^\top \mathbb{A}^{s}_r R }_{i_1,\ldots,i_k} [ \mathbf{a}\ev{j_1,\ldots,j_r}& = \chi\ev{ \{ j_1, \ldots, j_r \} \in E\ev{L_r^A} }, \forall j_1,\ldots,j_r ]\\
& \hspace{-2cm}= R^\top \mathcal{A}_{L_r^A} R \\
& \hspace{-2cm}= \mathcal{A}_{t\ev{L_r^A}}  \\
& \hspace{-2cm} = \ev{ \mathbb{A}^{s}_r }_{i_1,\ldots,i_k} [ \mathbf{a}\ev{j_1,\ldots,j_r} = \chi\ev{ \{ j_1, \ldots, j_r \} \in E\ev{t\ev{L_r^A}} }, \forall j_1,\ldots,j_r ].
\end{align*}

Recall that $Q = R\oplus I_{n-s}$. To see that $Q^\top \mathcal{A}_G Q = \mathcal{A}_H$, it is enough to show that their values agree for each index set $\{i_1,\ldots,i_k\}$. Let $1\leq i_1 < \ldots < i_k \leq n$ be chosen. Without loss of generality, we may assume that $1\leq i_1 < \ldots < i_r \leq s < i_{r+1} < \ldots < i_k \leq n$, for some $r$. Define $A:= \{ i_{r+1}, \ldots, i_{k}\}$. We know that
\begin{align*}
& (Q^\top \mathbb{A}^n_k Q)_{ i_1,\ldots,i_k } = \sum_{ \{ j_1,\ldots,j_k\} \in \binom{ [n] }{k} } \mathbf{a}\ev{j_1,\ldots,j_k}\mathrm{per}\ew{ Q^{\top}\big\vert_{ \{ i_1,\ldots,i_k\} \times \{ j_1,\ldots,j_k \} } } 
\end{align*}
Since $(Q^{\top})_{ a,b } = \delta_{a,b}$, for any $s<a,b$, where $\delta$ is the Kronecker symbol, it follows that
\begin{equation}
\label{eqn:assumption_4}
(Q^\top \mathbb{A}^n_k Q)_{ i_1,\ldots,i_k } =  \sum_{ \{ j_1,\ldots,j_r\} \in \binom{ [s] }{r} } \mathbf{a}\ev{j_1,\ldots, j_r, i_{r+1},\ldots ,i_k}\mathrm{per}\ew{ R^{\top}\big\vert_{ \{ i_1,\ldots,i_r\} \times \{ j_1,\ldots,j_r \} } } 
\end{equation}
Hence, we obtain
\begin{align*}
&(Q^\top \mathbb{A}^n_k Q)_{ i_1,\ldots,i_k }[ \mathbf{a}\ev{j_1,\ldots,j_k} = \chi\ev{ \{ j_1, \ldots, j_k \} \in E\ev{G} }, \forall j_1,\ldots,j_k ] \\
& =(Q^\top \mathbb{A}^n_k Q)_{ i_1,\ldots,i_k }
[\mathbf{a}\ev{j_1,\ldots,j_r,i_{r+1},\ldots,i_k} = \chi\ev{ \{ j_1,\ldots,j_r,i_{r+1},\ldots,i_k \} \in E\ev{G} }, \forall j_1,\ldots,j_r]
\end{align*}
By assumption (\ref{eqn:to_refer}), we have
\begin{align*}
&(Q^\top \mathbb{A}^n_k Q)_{ i_1,\ldots,i_k }[ \mathbf{a}\ev{j_1,\ldots,j_k} = \chi\ev{ \{ j_1, \ldots, j_k \} \in E\ev{G} }, \forall j_1,\ldots,j_k ] \\
& = (Q^\top \mathbb{A}^n_k Q)_{ i_1,\ldots,i_k } [ \mathbf{a}\ev{j_1,\ldots, j_k} = \chi\ev{ \{ j_1, \ldots, j_r \} \in E\ev{L_r^A} } \delta\ev{j_{r+1},i_{r+1}} \cdots  \delta\ev{j_{k},i_{k}}, \forall j_1,\ldots,j_r  ] 
\end{align*}

On the other hand, 
\begin{align*}
& \ev{ R^\top \mathbb{A}^{s}_r R }_{i_1,\ldots,i_r} = \sum_{ \{ j_1,\ldots,j_r\} \in \binom{ [s] }{r} } \mathbf{a}\ev{j_1,\ldots, j_r}\mathrm{per}\ew{ R^\top\big\vert_{ \{ i_1,\ldots,i_r\} \times \{ j_1,\ldots,j_r \} } } 
\end{align*}
which implies, by (\ref{eqn:assumption_4}) that
\begin{equation}
\label{eqn:assumption_3}
(Q^\top \mathbb{A}^n_k Q)_{ i_1,\ldots,i_k } [ \mathbf{a}\ev{j_1,\ldots, j_r, i_{r+1},\ldots ,i_k} = \mathbf{a}\ev{j_1,\ldots, j_r}, \forall j_1,\ldots,j_r  ] = \ev{ R^\top \mathbb{A}^{s}_r R }_{i_1,\ldots,i_r}
\end{equation}

Then, 
\begin{align*}
& \ev{ Q^\top \mathcal{A}_G Q }_{ i_1,\ldots,i_k }  = (Q^\top \mathbb{A}^n_k Q)_{ i_1,\ldots,i_k } [ \mathbf{a}\ev{j_1,\ldots,j_k} = \chi\ev{ \{ j_1, \ldots, j_k \} \in E\ev{G} }, \forall j_1,\ldots,j_k ] \\
& = (Q^\top \mathbb{A}^n_k Q)_{ i_1,\ldots,i_k } [ \mathbf{a}\ev{j_1,\ldots, j_k} = \chi\ev{ \{ j_1, \ldots, j_r \} \in E\ev{L_r^A} } \delta\ev{j_{r+1},i_{r+1}} \cdots  \delta\ev{j_{k},i_{k}}, \forall j_1,\ldots,j_k ] \\ 
& = \ev{ R^\top \mathbb{A}^{s}_r R }_{i_1,\ldots,i_r} [ \mathbf{a}\ev{j_1,\ldots,j_r} = \chi\ev{ \{ j_1, \ldots, j_r \} \in E\ev{L_r^A} }, \forall j_1,\ldots,j_r ] \\
& \text{ by (\ref{eqn:assumption_3}) } \\
& =  \ev{ \mathbb{A}^{s}_r }_{i_1,\ldots,i_r} [ \mathbf{a}\ev{j_1,\ldots,j_r} = \chi\ev{ \{ j_1, \ldots, j_r \} \in E\ev{t\ev{L_r^A}} }, \forall j_1,\ldots,j_r ]\\ 
& =  \ev{ \mathbb{A}^{n}_k }_{i_1,\ldots,i_k} [ \mathbf{a}\ev{j_1,\ldots,j_k} = \chi\ev{ \{ j_1, \ldots, j_k \} \in E\ev{H} }, \forall j_1,\ldots,j_k ]\\
& \text{ by (\ref{eqn:assumption_implies_2}), in a similar fashion} \\
& = \ev{ \mathcal{A}_H }_{ i_1,\ldots,i_k }.
\qedhere\end{align*}
\end{proof}

Historically, most switching results describe a process of removing and replacing edges, therefore in order to unify nomenclature we restate \Cref{thm:main_theorem} below in slightly different, more concise language by generalizing the classical notion of the ``link'' of vertices in a hypergraph. For each $C\subseteq V\ev{G}$ and $A\subseteq V\ev{G}\setminus C$, let the \textit{link of $A$ in $C$} be the hypergraph $G[C;A]$ with vertex set $C$ and edge set $\{ f\in \binom{C}{k-|A|} : f\cup A \in E\ev{G} \} $. This notation allows us to express the main theorem as follows: 

\begin{corollary}
\label{cor:main_theorem_2}
Fix $R\in \Q^{s\times s}$ and $Q := R\oplus I_{n-s}$. Let $G$ be a $k$-uniform hypergraph with $k\geq 2$. Let $C=\{ 1,\ldots, s \}$ be a subset of $V\ev{G}$. Let $D := V\ev{G}\setminus C $. Assume that for each $A \subseteq D$, we have $L^{A} := G[C;A] \in \mathcal{B}^{k-|A|}_{R}$. Then, define $H$ on $V\ev{G}$ by 
$$
E\ev{H} := \bigcup_{A \subseteq D}  E\ev{t\ev{L^{A}}} \odot \{A\}.
$$
Then, $G$ and $H$ satisfy $Q^\top \mathcal{A}_{G} Q = \mathcal{A}_{H}$, and $G$ and $H$ are therefore $E$-cospectral.
\end{corollary}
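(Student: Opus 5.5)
The plan is to reduce the corollary to \Cref{thm:main_theorem} and then invoke \Cref{l.ortho}. The links $G[C;A]$ for $A\subseteq D$ are precisely the hypergraphs $L_r^A$ appearing in (\ref{eqn:to_refer}), with $r=k-|A|$. Indeed, by definition of the link, $\{e\cup A\in E(G): e\in\binom{C}{r}\} = E(G[C;A])\odot\{A\}$, since $e\cup A$ is the same set as $e\odot\{A\}$ when $e\cap A=\emptyset$. Subsets $A\subseteq D$ with $|A|>k$ contribute nothing, because then $r<0$ and no edge of $G$ can contain $A$. For $|A|=k$ (that is, $r=0$) the link is either empty or $\{\emptyset\}$; I would treat it separately as a trivial case, where $t$ is the identity, since $R$ plays no role on edges entirely inside $D$ (because $Q$ restricted to $D$ is $I_{n-s}$).

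Next I check that the edge set of $H$ coincides with the one produced in \Cref{thm:main_theorem}. Every edge $f$ of $G$ decomposes uniquely as $f=(f\cap C)\sqcup A$ with $A=f\cap D$. Hence the families $E(L^A)\odot\{A\}$, for distinct $A$, are pairwise disjoint and their union is $E(G)$. Consequently, removing every $E(L^A)\odot\{A\}$ and adding every $E(t(L^A))\odot\{A\}$ yields exactly $\bigcup_{A} E(t(L^A))\odot\{A\}$. The resulting sets are again disjoint across different $A$, because their intersection with $D$ is exactly $A$.

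The proof of \Cref{thm:main_theorem} actually establishes the identity $Q^\top\mathcal{A}_GQ=\mathcal{A}_H$ entrywise, by reducing each index set $\{i_1,\dots,i_k\}$ with $A=\{i_1,\dots,i_k\}\cap D$ to the $r$-uniform identity $R^\top\mathcal{A}_{L^A}R=\mathcal{A}_{t(L^A)}$ via the permanent expansion (\ref{eqn:assumption_4}). I would quote that identity directly. I would also note that the entries with repeated indices vanish on both sides: for $\mathcal{A}_H$ this holds by definition, and for the left side it follows from the same expansion, with repeated rows in $C$ handled by the $r$-uniform identity for $R$ and repeated indices in $D$ killed by $\delta$.

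Finally, $Q=R\oplus I_{n-s}$ is real orthogonal, since $R$ is one of the rational orthogonal matrices of \Cref{sec:indecomp}. So $Q^\top$ is orthogonal, and \Cref{l.ortho} applied with $Q^\top$ gives that $\mathcal{A}_G$ and $\mathcal{A}_H$ are $E$-cospectral. The only real obstacle is bookkeeping. One point is the edge cases $|A|\in\{0,k\}$, where the links are $k$-graphs on $C$ and $0$-graphs respectively. The other is that the statement says only $R\in\mathbb{Q}^{s\times s}$, whereas orthogonality of $R$ is implicitly needed for the final cospectrality claim. I would state that this is assumed, as in the standing convention of \Cref{sec:Ecospectralityswitching}.
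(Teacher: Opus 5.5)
Your proposal is correct and takes the paper's route. The paper gives no separate proof: it presents the corollary as a restatement of Theorem~\ref{thm:main_theorem} with $L_r^A = G[C;A]$. The entrywise identity $Q^\top\mathcal{A}_GQ=\mathcal{A}_H$ comes from that theorem's proof, and \Cref{l.ortho} then gives $E$-cospectrality. Your extra bookkeeping fills points the paper leaves tacit, and it is sound: treating $|A|=k$ with $t$ the identity, the vanishing of repeated-index entries, and the fact that orthogonality of $R$ must be assumed for the cospectrality claim.
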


\section{Consequences of Theorem \ref{thm:main_theorem}}
\label{sec:conseqs}
In this section, we use the general method from the previous section to recover several known switching methods for obtaining $E$-cospectral hypergraphs (GM switching \cite{bzw2014} and WQH-switching \cite{abiad_khramova}). Afterwards, we show that our general \Cref{thm:main_theorem} also provides several new switching methods. In the literature, hypergraph switching methods are stated such that the induced hypergraph on the switching set is empty, but this is not required as shown by \Cref{thm:main_theorem}.

The \emph{degree} of a vertex $u\in V(G)$ is the number of edges that contain $u$. For any edge $\{ u_1,\ldots, u_k\} \in E\ev{G}$, we say that $u_1$ is a \textit{neighbour} of $\{ u_2,\ldots, u_k\}$. The \textit{neighbourhood} of $\{ u_2,\ldots, u_k\}$ is defined as $\Gamma(u_2,\ldots, u_k):= \{ x\in V\ev{G}: \{x,u_2,\ldots, u_k \} \in E\ev{G}\}$.
\subsection{GM hypergraph switching}

The GM-switching was generalized to $k$-uniform hypergraphs, for $k\geq 2$, in \cite[Theorem 3.1, p.~4]{bzw2014}. We will provide an alternative proof using \Cref{thm:main_theorem}. Let $Q = \prescript{s}{}{R}_{gm} \oplus I_{n-s}$, for some even $s\geq 4$ and $n\geq s$ (q.v.~\Cref{def:gm_and_wqh_matrices}).
\begin{corollary}[\cite{bzw2014}]
\label{cor:E_gm_switch}
Let $G$ be a $k$-uniform hypergraph, for some $k\geq 2$. Assume that the vertex set has a partition $V\ev{G} = C \sqcup D$ with $|C|=s$. Furthermore, assume that $G$ satisfies the conditions,
\begin{enumerate}
\item For all $e\in E\ev{G}$, we have $|e \cap C|\leq 1$. 
\item For any $k-1$ distinct vertices $u_2,\ldots, u_k$ from $D$, we have $|\Gamma\ev{ u_2,\ldots, u_k } \cap C| \in \{ 0, (1/2)\cdot |C|, |C| \}$.
\end{enumerate}
To construct a hypergraph $H$, for all $\{ u_2, \ldots, u_k \} \subseteq D$ with $ (1/2)|C| $ many neighbours in $C$, remove the edges $\{ \{ x, u_2,\ldots,u_k\} \in E\ev{G} : x\in C\}$ and add the edges $\{ \{ x, u_2,\ldots,u_k\} \notin E\ev{G}: x\in C \} $. Then, we have $Q^\top \mathcal{A}_G Q = \mathcal{A}_H$ and in particular, the hypergraphs $G$ and $H$ are E-cospectral. 
\end{corollary}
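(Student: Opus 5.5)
The plan is to derive the statement directly from \Cref{cor:main_theorem_2}, applied with $R = \prescript{s}{}{R}_{gm}$. For every $A \subseteq D$, we check that the link $L^A = G[C;A]$ belongs to $\mathcal{B}^{k-|A|}_{R}$. We then check that the hypergraph $H$ produced by \Cref{cor:main_theorem_2} is exactly the one described in the statement. The identity $Q^\top \mathcal{A}_G Q = \mathcal{A}_H$ follows, and $E$-cospectrality then follows from \Cref{l.ortho}, since $\prescript{s}{}{R}_{gm} = \frac{2}{s}J - I$ is symmetric, rational and orthogonal. So $Q$ is a real orthogonal matrix.

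First we sort the sets $A\subseteq D$ by $r = k-|A|$. If $r \geq 2$, condition 1 says no edge meets $C$ in $r$ vertices, so $L^A$ is the empty $r$-graph. Since $Q^\top \mathcal{A} Q$ is linear in $\mathcal{A}$ (\Cref{lem:l.count}), the empty hypergraph lies in $\mathcal{B}^r_R$ with $t(L^A)$ empty. Thus no edges with $|e\cap C|\ge 2$ are created or removed. If $r = 0$, i.e.\ $A\in\binom{D}{k}$, then $Q$ acts as the identity on those coordinates. More precisely, the sum in the proof of \Cref{thm:main_theorem} with $r=0$ reduces to the single term $\mathbf{a}(i_1,\dots,i_k)$, so these edges are kept unchanged. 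For this purpose I will read the $r=0$ case of \Cref{cor:main_theorem_2} as the trivial statement that edges inside $D$ are untouched.

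The substantive case is $r=1$, with $A = \{u_2,\dots,u_k\}\in\binom{D}{k-1}$. Here $L^A$ is the $1$-graph on $C$ whose edge set is $\binom{\Gamma(u_2,\dots,u_k)\cap C}{1}$. By condition 2 the set $X := \Gamma(u_2,\dots,u_k)\cap C$ has size $0$, $s/2$ or $s$. We then invoke \Cref{prop:exhaustive_data}(i). It says that these are exactly the members of $\mathcal{B}^1_R$, with $t$ fixing $\emptyset$ and $\binom{C}{1}$ and sending $\binom{X}{1}$ to $\binom{C\setminus X}{1}$ when $|X| = s/2$.

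One can also verify this directly: for a $0/1$ vector $\mathbf{v}$ with $|X|=s/2$ ones, $R^\top\mathbf{v} = \frac{2}{s}\cdot\frac{s}{2}\mathbb{1} - \mathbf{v} = \mathbb{1}-\mathbf{v}$. So $t(L^A)$ has edge set $\binom{C\setminus X}{1}$. Consequently, the edges $\{x,u_2,\dots,u_k\}$ with $x\in C$ are replaced by their complements in $C$ precisely when $A$ has $s/2$ neighbours in $C$, and are otherwise unchanged. This matches the construction of $H$ in the statement.

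The main obstacle is bookkeeping rather than mathematics. One point is confirming that \Cref{cor:main_theorem_2} (equivalently \Cref{thm:main_theorem}) legitimately covers the cases $r\ge 2$ with empty links and the case $r=0$. The other is making sure the hypotheses are satisfied for every $A\subseteq D$, including sets $A$ for which $G[C;A]$ is empty. Once these are checked, the conclusion $Q^\top\mathcal{A}_GQ=\mathcal{A}_H$ is immediate, and $E$-cospectrality follows from \Cref{l.ortho}.
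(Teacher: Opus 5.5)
Your proposal is correct and is essentially the paper's proof: apply the main switching theorem with $R=\prescript{s}{}{R}_{gm}$, use condition~1 to reduce to the links with $r=1$, and use \Cref{prop:exhaustive_data}\textit{(i)} (or your direct check $R^\top\mathbf{v}=\mymathbb{1}-\mathbf{v}$ when $|X|=s/2$) to see that $t$ complements a half-size neighbourhood in $C$ and fixes $\emptyset$ and $\binom{C}{1}$. Your explicit treatment of the empty links for $r\geq 2$ and of the untouched edges inside $D$ just spells out what the paper compresses into ``it is enough to consider $r=1$''; note that invoking \Cref{thm:main_theorem} directly instead of \Cref{cor:main_theorem_2} avoids the undefined $r=0$ case you had to reinterpret.
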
 
\begin{proof}
By the first condition that $G$ satisfies, it is enough to consider $r=1$ in \Cref{thm:main_theorem}. Take any subset $A := \{ u_2, \ldots, u_k \} \subseteq D$, and consider the $1$-graph $L_1^{A}$ with vertex set $C$ and edge set $E\ev{L_1^{A}} := \{ \{ x \}: x\in \Gamma\ev{ u_2,\ldots, u_k } \cap C\}$. By assumption, we know that $|E\ev{L_1^{A}}| = (1/2)\cdot |C|$. By \Cref{prop:exhaustive_data} Part \textit{(i)}, we have $L_1^{A} \in \mathcal{V}\ev{\prescript{s}{}{R}_{gm}} $ and $E\ev{ t\ev{ L_1^{A} }} = \binom{C}{1} \setminus E\ev{ L_1^{A} } = \{ \{ x \}: x\in C \setminus \Gamma\ev{ u_2,\ldots, u_k } \} $. 

We can see that (\ref{eqn:to_refer}) of \Cref{thm:main_theorem} is satisfied, and also that $H$ is constructed exactly by removing the edges $E\ev{L_1^{A}} \odot \{A\}$ and adding the edges $E\ev{t\ev{ L_1^{A} }} \odot \{A\}$, so the statement follows.
\end{proof}

\subsection{WQH hypergraph switching}

In \cite[Theorem 2.6, p.~4]{abiad_khramova}, the WQH-switching method of \cite{wang_qui_hu_2019} was generalized to hypergraphs. We will provide an alternative proof below, using our main theorem. Let $Q := \prescript{2p}{}{R}_{wqh} \oplus I_{n-2p}$, for some $1\leq p$ and $2p\leq n$ (q.v.~\Cref{def:gm_and_wqh_matrices}). Note that $Q^\top = Q$.
\begin{corollary}[\cite{abiad_khramova}]
\label{cor:E_WQH_switch}

Let $G$ be a $k$-uniform hypergraph, for some $k\geq 2$. Assume that the vertex set has a partition $V\ev{G} = C_1 \sqcup C_2 \sqcup D$ with $|C_i| = p $. Furthermore, assume that $G$ satisfies the conditions,
\begin{enumerate}
\item For all $e\in E\ev{G}$, we have $|e \cap (C_1 \cup C_2) |\leq 1$. 
\item For any $k-1$ distinct vertices $u_2,\ldots, u_k$ from $D$, we have $\Gamma\ev{ u_2,\ldots, u_k } \cap (C_1 \cup C_2) \in \{ C_1, C_2 \}$ or $|\Gamma\ev{ u_2,\ldots, u_k } \cap C_1| = |\Gamma\ev{ u_2,\ldots, u_k } \cap C_2|$.
\end{enumerate}
To construct a hypergraph $H$, for all $\{ u_2, \ldots, u_k \} \subseteq D$ such that $\Gamma\ev{ u_2,\ldots, u_k } \cap (C_1 \cup C_2) \in \{ C_1, C_2 \}$, switch the adjacency of $\{u_2,\ldots, u_k\}$ for all $u_1 \in C_1 \cup C_2$. Then, we have $Q^\top \mathcal{A}_G Q = \mathcal{A}_H$ and in particular, the hypergraphs $G$ and $H$ are E-cospectral. 
\end{corollary}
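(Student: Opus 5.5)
The plan mirrors the proof of \Cref{cor:E_gm_switch}: reduce to \Cref{thm:main_theorem} with $R = \prescript{2p}{}{R}_{wqh}$ and switching set $C = C_1 \sqcup C_2$, so $s = 2p$ and $Q = R \oplus I_{n-2p}$.

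First, condition 1 says that every edge meets $C$ in at most one vertex. Hence for every $r \geq 2$ and every $A \in \binom{D}{k-r}$, the set $\{ e\cup A \in E\ev{G} : e \in \binom{C}{r}\}$ is empty. So we may take $L_r^A$ to be the empty hypergraph on $C$. It lies in $\mathcal{B}^r_R$ with $t\ev{L_r^A}$ empty, since $R^\top \cdot 0 \cdot R = 0$. For $r = 0$, i.e.\ $A \in \binom{D}{k}$, nothing is required: $Q$ acts as the identity on those indices, and such edges are kept unchanged in both $G$ and $H$.

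The only nontrivial case is $r = 1$. Fix $A = \{u_2,\ldots,u_k\} \in \binom{D}{k-1}$ and let $L_1^A$ be the $1$-graph on $C$ with edge set $\{\{x\} : x \in \Gamma\ev{u_2,\ldots,u_k}\cap C\}$, so that (\ref{eqn:to_refer}) holds by construction. By condition 2, either $\Gamma\ev{A}\cap C \in \{C_1, C_2\}$, or $|\Gamma\ev{A}\cap C_1| = |\Gamma\ev{A}\cap C_2|$.

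\Cref{prop:exhaustive_data} (ii) lists the pairs in $\mathcal{B}^1_R$. We need a small symmetry remark here: (ii) lists $(\binom{C_1}{1},\binom{C_2}{1})$, and the reverse pair is also valid. One way to see this is that $R$ is a symmetric involution: $R^2 = I$ can be checked blockwise using $J_p^2 = pJ_p$. Alternatively, compute directly that $R\mymathbb{1}_{C_2} = \mymathbb{1}_{C_1}$. Using these pairs:
\begin{itemize}
\item In the balanced case, $t\ev{L_1^A} = L_1^A$, so nothing changes.
\item If $\Gamma\ev{A}\cap C = C_1$, then $t\ev{L_1^A}$ has edge set $\binom{C_2}{1}$, and symmetrically when $\Gamma\ev{A}\cap C = C_2$.
\end{itemize}
Replacing $E\ev{L_1^A}\odot\{A\}$ by $E\ev{t\ev{L_1^A}}\odot\{A\}$ therefore exactly switches the adjacency of $A$ with every vertex of $C_1\cup C_2$ in these cases. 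This is the $H$ described in the statement.

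With all hypotheses of \Cref{thm:main_theorem} verified, and noting from the proof of the theorem that in fact $Q^\top\mathcal{A}_GQ = \mathcal{A}_H$, we conclude via \Cref{l.ortho} that $G$ and $H$ are $E$-cospectral. Here $Q$ is real orthogonal because $R^\top R = R^2 = I$.

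The main point needing care is the bookkeeping of the cases $r \geq 2$ and the reversed pair $(C_2, C_1)$ missing from \Cref{prop:exhaustive_data} (ii). Both are settled by the direct computations above.
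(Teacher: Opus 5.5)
Your proposal is correct and follows the same route as the paper. You apply \Cref{thm:main_theorem} with the WQH matrix on $C = C_1 \sqcup C_2$, use condition~1 so that only $r=1$ matters, and read off $t(L_1^A)$ from \Cref{prop:exhaustive_data}~(ii). Your write-up is also more complete than the paper's, which treats only the case $\Gamma(u_2,\ldots,u_k)\cap C = C_1$ ``without loss of generality''. You explicitly handle the empty links for $r\ge 2$, the balanced case, and the reversed pair $(\binom{C_2}{1},\binom{C_1}{1})$, which is missing from the list in \Cref{prop:exhaustive_data}~(ii); these are exactly the details the paper leaves implicit.
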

\begin{proof}
By the first condition that $G$ satisfies, it is enough to consider $r=1$ in \Cref{thm:main_theorem}. Take any subset $A := \{ u_2, \ldots, u_k \} \subseteq D$. Without loss of generality, assume that $\Gamma\ev{ u_2,\ldots, u_k } \cap (C_1 \cup C_2) = C_1 $. Define the $1$-graph $L_1^{A}$ with vertex set $C_1 \cup C_2$ and edge set $E\ev{L_1^{A}} := \{ \{ x \}: x\in C_1\}$. By \Cref{prop:exhaustive_data} Part \textit{(ii)}, we know that $L_1^{A} \in \mathcal{V}\ev{\prescript{2p}{}{R}_{wqh}} $ and $E\ev{ t\ev{ L_1^{A} }} = \binom{C}{1} \setminus E\ev{L_1^{A}} = \{ \{ x \}: x\in C_2 \} $. We can see that (\ref{eqn:to_refer}) of \Cref{thm:main_theorem} is satisfied, and also that $H$ is constructed exactly by removing the edges $E\ev{L_1^{A}} \odot \{A\}$ and adding the edges $E\ev{t\ev{ L_1^{A} }} \odot \{A\}$, so the statement follows.
\end{proof}

\subsection{Sun hypergraph switching}
We have a generalization of sun graph switching as found in \cite[Theorem 5, p.~6]{abiad_berg_simoens}, which is equivalent to the original description in \cite{mao_wang_liu_qiu_2023} (the matrices used for $R_{sg}$ in these descriptions are different, but equivalent up to a reordering of rows and columns.) 

Recall that a \textit{sun graph} is a graph on $2m$ vertices, obtained by adding a pendant edge to each vertex of a cycle of length $m\geq 3$. In the theorem below, the induced graph on $C$ is a ``generalized'' sun graph, which is obtained from a sun graph by adding complete bipartite graphs $K_{2,2}$ based on the given rules below. The switching operation produces another generalized sun graph. 

As in \Cref{prop:exhaustive_data}, we assume $V\ev{G} = C_1 \sqcup \ldots \sqcup C_{m} \sqcup D $, where $C_i = \{ v_{2i-1}, v_{2i}\}$, for each $i=1,\ldots, m$ and $m\geq 3$ is odd. Recall that $\pi\ev{v_j} = v_{j-2 \pmod{2m}}$, for each $j=1\ldots,2m$ ($\pi\ev{v_2} = v_{2m}\in C_m$.); in particular, $\pi\ev{C_i} = C_{i-1}$, for each $i$. 
\begin{corollary}
Let $k\geq 2$ and let $G$ be a $k$-uniform hypergraph. Assume that:
\begin{enumerate}
\item Every $(k-1)$-subset of $D$ has the same number of neighbors in $C_1,\ldots,C_m$ modulo $2$.
\item For each $A\in \binom{D}{k-2}$ and for all $1\leq i < j \leq i + \frac{m-1}{2}$, the edge set of the link $G[C_i \cup C_j;A]$ is:
\begin{align*}
\emptyset \text{ or } C_i \times C_j 				\ (\text{empty or complete bipartite})	 && \hspace{-3mm}  \text{ if } j < i + \frac{m-1}{2} \\ 
\{\{v_{2i-1}, x\} : x\in C_j\} \text{ or } \{\{v_{2i},x\}: x\in C_j\} 					     && \hspace{-3mm} \text{ if } j = i +\frac{m-1}{2}
\end{align*}
\end{enumerate}
To construct a hypergraph $H$, 
\begin{enumerate}
\item For every $A\in \binom{D}{k-1}$ such that $A$ has exactly one neighbour $v_{s_i} \in C_i$ ($s_i\in \{2i-1,2i\}$) for each $i$, remove the edges $ \{ v_{s_i} \} \odot \{ A \}$ and add the edges $ \pi\ev{ v_{s_i} } \odot \{A\}$. 
\item For all $A\in \binom{D}{k-2}$ and for each $i=1,\ldots,m$, remove the edges of $A\cup C_i \cup C_{i+\frac{m-1}{2}}$ and replace them with those of the induced subgraph on $A\cup C_i \cup C_{i+\frac{m+1}{2}}$, in other words, add the edges $\{A\cup\{v_{2i - 2 + m}, x\} : x\in C_i\}$ if $\{A\cup\{v_{2i + m}, x\} : x\in C_i\}$ are edges of $G$, or add $\{A\cup\{v_{2i - 1 + m}, x\} : x\in C_i\}$ if $\{A\cup\{v_{2i + m + 1}, x\} : x\in C_i\}$ are edges of $G$. 
\end{enumerate}
Then, $G$ and $H$ are E-cospectral.
\end{corollary}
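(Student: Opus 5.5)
This is an application of \Cref{thm:main_theorem} (equivalently \Cref{cor:main_theorem_2}) with $R = \prescript{2m}{}{R}_{sg}$ and $Q = R\oplus I_{n-2m}$. We must check that for every $A\subseteq D$ the link $L^A = G[C;A]$ lies in $\mathcal{B}^{k-|A|}_R$, and that $t(L^A)$ is exactly the link of $A$ in $H$. Then $Q^\top \mathcal{A}_G Q = \mathcal{A}_H$ and \Cref{l.ortho} gives $E$-cospectrality. The statement implicitly assumes that no edge meets $C$ in three or more vertices, i.e.\ $G[C;A]$ is empty whenever $|A|<k-2$. We make this explicit at the start, since the empty hypergraph trivially lies in every $\mathcal{B}^r_R$ with $t(\emptyset)=\emptyset$. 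It then suffices to treat $r=1$ and $r=2$.

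\textbf{Case $r=1$, $A\in\binom{D}{k-1}$.} Here $L^A_1$ is the $1$-graph with edge set $\Gamma(A)\cap C$. Condition 1 says the numbers $|\Gamma(A)\cap C_i|$ all have the same parity. If they are all even, \Cref{prop:exhaustive_data}(v) gives $L^A_1\in\mathcal{V}(R)$ with $t(L^A_1)=L^A_1$, so nothing changes; this matches the construction, which touches only the odd case. If they are all odd, then each $|\Gamma(A)\cap C_i|=1$ since $|C_i|=2$. In that case \Cref{prop:exhaustive_data}(v) gives $t(L^A_1)$ with edges $\pi(v_{s_i})$, which is exactly step 1 of the construction of $H$.

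\textbf{Case $r=2$, $A\in\binom{D}{k-2}$.} Here $L^A_2$ is a graph on $C$, and we need it to lie in $\mathcal{B}^2_{R_{sg}}$, i.e.\ $R^\top \mathcal{A}_{L}R$ is a $0/1$ matrix. We would invoke the graph-case classification \cite[Theorems 5,7,8]{abiad_berg_simoens} (\Cref{table:exhaustive_list}). That classification describes exactly such ``generalized sun graphs'': for pairs $C_i,C_j$ at cyclic distance less than $\frac{m-1}{2}$ the bipartite piece is empty or $K_{2,2}$, and at distance exactly $\frac{m-1}{2}$ it is a ``half'' $\{v\}\times C_j$. Condition 2 imposes this pattern on the link. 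One must also check edges inside a single $C_i$, and the pairs not listed when $j>i+\frac{m-1}{2}$; the latter are covered cyclically, since distances in $\mathbb{Z}_m$ are symmetric. We would state the relevant lemma from \cite{abiad_berg_simoens} in the paper's normalization of $R_{sg}$, using the remark that the two matrices of \cite{mao_wang_liu_qiu_2023} and \cite{abiad_berg_simoens} agree up to permutation.

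\textbf{Main obstacle: computing $t(L^A_2)$.} The difficulty is verifying that $t(L^A_2)$ is the graph prescribed in step 2. Using the block-circulant form $R=\frac12\mathrm{circulant}(J,O,\dots,O,Y)$, each $2\times 2$ block of $R^\top \mathcal{A}_L R$ is a combination $\frac14(JBJ+JB'Y+YB''J+YB'''Y)$ of neighbouring blocks of $\mathcal{A}_L$. Blocks that are $O$ or $J$ are preserved and shifted compatibly with $\pi$. A half-block $e_a\mathbb{1}^\top$ at distance $\frac{m-1}{2}$ is carried to a half-block at distance $\frac{m+1}{2}$ with the indicated vertex relabelled ($v_{2i+m}\mapsto v_{2i-2+m}$, etc.). We would carry out this block computation once, for a single generic pair of block indices, rather than edge by edge. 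Reading off the result gives step 2 of the construction, so $t(L^A_2)=H[C;A]$. \Cref{cor:main_theorem_2} then finishes the proof.
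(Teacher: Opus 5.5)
Your proposal follows the paper's own proof: apply \Cref{thm:main_theorem} with $R=\prescript{2m}{}{R}_{sg}$, settle $r=1$ with \Cref{prop:exhaustive_data}(v), settle $r=2$ with the graph sun-switching result of \cite{abiad_berg_simoens}, and you are right that the hypothesis $|e\cap C|\le 2$ is used there only tacitly. If you carry out the block computation yourself, the $O$ and $J$ blocks are not shifted at all, since only the $\frac14 JBJ$ term survives for them and so they stay in place; only the half-blocks change (source vertex $v\mapsto\pi(v)$, target block fixed), which is why step~2 of the construction leaves everything else untouched.
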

\begin{proof}
We need to consider two cases. If $k=1$, then the neighbourhood of any $k-1$-subset of $D$ is an element of $\mathcal{V}\ev{R_{sg}}$ and its replacement is a zero-one vector, by \Cref{prop:exhaustive_data}. If $k=2$, the induced subgraph on $C$ is in $\mathcal{B}^2_{R_{sg}}$ and its replacement is a graph, by \cite[Theorem 5, p.~6]{abiad_berg_simoens}. It follows, by \Cref{thm:main_theorem} that $G$ and $H$ are E-cospectral. 
\end{proof}

\begin{example}
As an example, consider $3$-uniform hypergraphs $G,H$ such that $V\ev{G} = V\ev{H} = \{ 0, 1,\ldots, 8, 9\}\cup \{x,y\}$ and $Q = \prescript{10}{}{R}_{sg} \oplus I_2$ and $Q^\top \mathcal{A}_G Q = \mathcal{A}_H$.
\begin{align*}
E\ev{G} & = \{ 18x, 25x, 26x, 28x, 03x, 47x, 48x, 04x, 69x, 06x, 1xy, 3xy, 5xy, 7xy, 0xy \} \\
E\ev{H} & = \{ 16x, 26x, 27x, 28x, 38x, 48x, 49x, 04x, 05x, 06x, 9xy, 1xy, 3xy, 5xy, 8xy \}
\end{align*}
\end{example}

\subsection{Fano hypergraph switching}

Let $C=\{ v_1, \ldots, v_7 \}$ be a fixed set. For each $i$, let $\pi \ev{ i }$ be the unique $j \in \{ 1, \ldots, 7\}$ such that $j \equiv i+1 \pmod{7} $.
\begin{definition}[Fano Plane as a 3-Graph]\leavevmode

\begin{enumerate}
\item Define $\ell^3 = \{ v_1, v_2, v_4 \}$ and $\mathcal{O}^3 = \{ v_3, v_5, v_6 \}$. 
\item Let $\ell_i := \pi^{i} \ev{ \ell^3 }$ and $\mathcal{O}_i := \pi^{i} \ev{ \mathcal{O}^3 }$ for $i=0,\ldots,6$  be the ``lines'' and ``ovals''. (Note that $C = \ell_i \sqcup \mathcal{O}_{i} \sqcup \{ v_{i+7} \}$, for each $i \pmod{7}$.)
\end{enumerate} 
\end{definition}
Let us write $ F_1 = \{\ell_i: i = 0,\ldots,6 \} $ and $F_2 = \{ \mathcal{O}_i : i = 0,\ldots,6 \}$. Then, $\ev{C, F_1}$ and $\ev{C, F_2}$ both define $3$-uniform Fano planes.

\begin{definition}[Edges of the Fano Plane as $1$-Graphs]\leavevmode
\begin{enumerate}
\item Let $\ell^1 := \{ \{v_1\},\{v_2\},\{v_4\} \}$ and $\mathcal{O}^1 = \{  \{v_3\},\{v_5\},\{v_6\} \}$.
\item Let $\ell_i^1 := \pi^{i} \ev{ \ell^1 }$ and $\mathcal{O}_i^1 := \pi^{i} \ev{ \mathcal{O}^1 }$ for $i=0,\ldots,6$. 
\end{enumerate} 
Then, the pairs $\ev{C, \ell_i^1}$ and  $\ev{C, \mathcal{O}_i^1}$ define $1$-graphs. 
\end{definition}

Recall that \cite[Theorem 25, p.~19]{abiad_berg_simoens} describes Fano switching for graphs of rank $2$. Here, we provide an analogue for $3$-uniform hypergraphs. 
\begin{corollary}
Let $G$ be a $3$-uniform hypergraph such that for all $e \in E\ev{G}$, we have $|e\cap C| \leq 1$. Let $C=\{ v_1,\ldots,v_7 \}$ be a subset of $V\ev{G}$. Let $D:= V\ev{G}\setminus C$. 
Suppose that:
\begin{enumerate}
\item[\textit{(i)}] the edge set of the induced subgraph $L$ on $C$ is empty or $F_1 \in \mathcal{B}^{3}_{R_f}$. 
\item[\textit{(ii)}] for any distinct pair of vertices $u_2, u_3$ from $D$, and for any $i=0,\ldots,6$, we have $\Gamma(u_2,u_3) \cap C \in \{ \emptyset, C \} \cup \{ \ell_i \}_{i=0}^{6} \cup \{ C\setminus \ell_i \}_{i=0}^{6}  $. 
\end{enumerate}
To construct a hypergraph $H$, replace the induced hypergraph $L$ with $t\ev{L}$; and for any $\{ u_2, u_3 \} \subseteq D$:
\begin{enumerate}
\item In case $\Gamma_G(u_2, u_3) \cap C = \ell_i$, then remove the edges $ \{ \{ x, u_2, u_3\} : x\in \ell_i \}$ and add the edges $ \{ \{ x, u_2, u_3\} : x\in \mathcal{O}_{i} \}$. 
\item In case $\Gamma_G(u_2, u_3) \cap C = C \setminus \ell_i$, then remove the edges $ \{ \{ x, u_2, u_3\} : x\in C \setminus \ell_i \}$ and add the edges $ \{ \{ x, u_2, u_3\} : x\in C \setminus \mathcal{O}_{i} \}$. 
\end{enumerate}
Then, $G$ and $H$ are E-cospectral. 
\end{corollary}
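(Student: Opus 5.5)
We apply \Cref{thm:main_theorem} with $R = R_f$, $s=7$, $k=3$, and $Q = R_f \oplus I_{n-7}$. For each $r\in\{1,2,3\}$ and each $A\in\binom{D}{3-r}$ we must exhibit a hypergraph $L_r^A\in\mathcal{B}^r_{R_f}$ on $C$ satisfying (\ref{eqn:to_refer}). We must also check that the hypergraph $H$ produced by the theorem is exactly the one described in the corollary. Equivalently, by \Cref{cor:main_theorem_2}, it suffices to show that every link $G[C;A]$ lies in $\mathcal{B}^{3-|A|}_{R_f}$ and that $t$ acts on each link as prescribed.

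\emph{Case $r=3$ ($A=\emptyset$).} The link $G[C;\emptyset]$ is the induced hypergraph $L$ on $C$. By hypothesis (i), its edge set is $\emptyset$ or $F_1$. By \Cref{prop:exhaustive_data}(iv), both lie in $\mathcal{B}^3_{R_f}$, with $t(\emptyset)=\emptyset$ and $t(F_1)=F_2$. The theorem therefore replaces $L$ by $t(L)$, which matches the construction.

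\emph{Case $r=2$ ($|A|=1$).} The hypothesis as stated is $|e\cap C|\le 1$ for every edge. This contradicts allowing $L=F_1$, whose edges lie inside $C$, so I read it as forbidding exactly $|e\cap C|=2$ (equivalently, $|e\cap C|\in\{0,1,3\}$). Under that reading, every link $G[C;\{u\}]$ with $u\in D$ is the empty $2$-graph. The empty graph lies in $\mathcal{B}^2_{R_f}$, since $R_f^\top\, 0\, R_f = 0$, and $t$ of it is again empty. So no edge meeting $C$ in two vertices is touched.

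\emph{Case $r=1$ ($A=\{u_2,u_3\}\subseteq D$).} The link $G[C;A]$ is the $1$-graph $\{\{x\}: x\in\Gamma(u_2,u_3)\cap C\}$. By hypothesis (ii), this set is one of $\emptyset$, $\binom{C}{1}$, $\ell_i^1$, or $\binom{C}{1}\setminus\ell_i^1$. By \Cref{prop:exhaustive_data}(iii), each of these lies in $\mathcal{B}^1_{R_f}=\mathcal{V}(R_f)$. The images are $\emptyset\mapsto\emptyset$, $\binom{C}{1}\mapsto\binom{C}{1}$, $\ell_i^1\mapsto\mathcal{O}_i^1$, and $\binom{C}{1}\setminus\ell_i^1\mapsto\binom{C}{1}\setminus\mathcal{O}_i^1$. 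Forming the product $\odot\{A\}$, the edge replacement is exactly rule 1 (when $\Gamma\cap C=\ell_i$) or rule 2 (when $\Gamma\cap C=C\setminus\ell_i$), and the other two cases leave the edges unchanged.

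\emph{Case $r=0$ ($|A|=3$, $A\subseteq D$).} These edges are unaffected, since $Q$ acts as the identity on $D$; this is already built into \Cref{cor:main_theorem_2}, where the $0$-uniform link is trivially preserved.

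Combining the four cases, \Cref{thm:main_theorem} gives $Q^\top\mathcal{A}_GQ=\mathcal{A}_H$, and \Cref{l.ortho} then yields $E$-cospectrality, because $R_f$ is real orthogonal. The main obstacle is purely bookkeeping. One must make sure the indexing of $\ell_i^1,\mathcal{O}_i^1$ in \Cref{prop:exhaustive_data}(iii) agrees with the $\ell_i,\mathcal{O}_i$ used in the rules; this is immediate, since both are defined as $\pi^i$ applied to the same base sets. One must also resolve the hypothesis on $|e\cap C|$ as described in the $r=2$ case.
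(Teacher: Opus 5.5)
Your proposal is correct and follows the paper's proof. Both apply \Cref{thm:main_theorem} with $R=R_f$, take $L_3^{\emptyset}\in\{\emptyset,F_1\}$ using \Cref{prop:exhaustive_data}(iv), and handle the four possible shapes of $\Gamma(u_2,u_3)\cap C$ for $r=1$ using \Cref{prop:exhaustive_data}(iii); in addition, you explicitly treat the $r=2$ links as empty (reading the hypothesis as excluding $|e\cap C|=2$, which resolves its literal conflict with $L=F_1$) and the edges inside $D$, points the paper's proof leaves implicit.
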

\begin{proof}
For $r=3$, we only need to consider $A = \emptyset \in \binom{D}{0} = \{ \emptyset \}$. Then, define $L_3^{A}$ to be the induced hypergraph $F_1$ on $C$ given by the hypothesis. For $r=1$, let $A := \{ u_2, u_3 \} \in \binom{D}{2}$ be any subset. By hypothesis, we consider four cases and in each case, we define a $1$-graph $ L_1^{A} \in \mathcal{B}^1_{R_f} $ with vertex set $C$ and a certain edge set. Afterwards, we refer to \Cref{prop:exhaustive_data} for the calculation of $t\ev{L_1^A}$. 
\begin{itemize}
\item $\Gamma(u_2,u_3) \cap C = C$. Then, define $L_1^{A}$ with edge set $\binom{C}{1}$. Then, we have $t\ev{L_1^{A}} = L_1^{A}$. 
\item $\Gamma(u_2,u_3) \cap C = \emptyset$. Then, define $ L_1^{A} $ with empty edge set. In this case, $t\ev{L_1^{A}} = L_1^{A}$ as well.
\item $\Gamma(u_2,u_3) \cap C = \ell_i$ for some $i$. Then, define $ L_1^{A} $ with edge set $\ell^1_i$. Note that $t\ev{L_1^{A}} = \mathcal{O}_{i}$. 
\item $\Gamma(u_2,u_3) \cap C = \binom{C}{1} \setminus \ell_i$ for some $i$. Then, define $ L_1^{A} $ with edge set $\binom{C}{1} \setminus \ell^1_i$. Note that $t\ev{L_1^{A}} = \binom{C}{1} \setminus \mathcal{O}_{i}$. 
\end{itemize} 
In all cases, the conditions of \Cref{thm:main_theorem} are satisfied, and the hypergraph $H$ is obtained by removing $E\ev{L_r^{A}} \odot \{A\}$ and adding $E\ev{t\ev{ L_r^{A} }} \odot \{A\}$, for each $r$ and $A$, so the statement follows.
\end{proof}
\begin{example}
The $3$-uniform hypergraphs $G,H$ such that $V\ev{G} = V\ev{H} = \{ 0,\ldots, 6\} \cup \{x,y,z\}$ and edge sets given below satisfy $Q^\top \mathcal{A}_G Q = \mathcal{A}_H$, where $Q = R_{f} \oplus I_3$:

$ E\ev{G} = \{   124  ,  2 3 5 ,  3 4 6 ,  4 5 7 ,  5 6 1 ,  6 7 2 ,  7 1 3 ,  1 x y ,  2 x y ,  4 x y ,  x y z \} $

$ E\ev{H} = \{   3 5 6  ,  4 6 7 ,  5 7 1 ,  6 1 2 ,  7 2 3 ,  1 3 4 ,  2 4 5 ,  3 x y ,  5 x y ,  6 x y ,  x y z \}.$
\end{example}

\subsection{Cube hypergraph switching}
Recall that \cite[Theorem 31, p.~19]{abiad_berg_simoens} describes cube switching for graphs of rank $2$, using the sporadic indecomposable regular orthogonal matrix $R_{c}$ (q.v.~\Cref{def:all_switching_matrices}). We provide an example of cube switching using hypergraphs of rank $4$. 
\begin{example}
Consider $4$-uniform hypergraphs $G,H$ such that $V\ev{G} = V\ev{H} = \{ 1,\ldots, 8\} \cup \{ x, y, z\}$ and $Q = R_c \oplus I_3$ and $Q^\top \mathcal{A}_G Q = \mathcal{A}_H$.

$ E\ev{G} = (\{ 2,3,6,7 \} \odot \{ x, y, z\}) \cup (\{ 17, 26, 46, 48, 28, 35, 45, 47, 56, 67 \} \odot \{ x, y \})$

$ E\ev{H} = (\{ 1,3,6,8 \} \odot \{ x, y, z\}) \cup (\{ 17, 26, 46, 48, 12, 18, 24, 27, 36, 37 \} \odot \{ x, y \}).$ 
\end{example}

\section{Regularity of adjacency tensors of hypergraphs}\label{sec:regular_implies_complete}

In this section, we consider alternative ways to define the E-characteristic polynomial. 

Consider the equations $\mathcal{A}x-\lambda x = 0$ and $x^\top x = 1$ (\ref{eqn:e_char_res}). Given a normalized E-eigenpair $\ev{\lambda,x}$, it follows that $\ev{\ev{-1}^{k}\lambda,-x}$ is also a normalized E-eigenpair. If $k$ is odd, this implies that negating an $E$-eigenvalue $\lambda$ yields a distinct normalized $E$-eigenvalue, unless $\lambda$ is zero. 

To define the E-characteristic polynomial, we take a generic tensor $\mathbb{A}^n_k = \ev{ \mathbf{a}\ev{j_1,\ldots,j_k} }$ and define the ideal $J$ of $\C[\lambda, \{ \mathbf{a}\ev{j_1,\ldots,j_k} \}, x_1, \ldots, x_n]$ generated by the $n+1$ many polynomials $ (\mathbb{A}x - \lambda x)_{1\leq i \leq n}, x^\top x - 1$. Then the \textit{generic E-characteristic polynomial} is the unique monic polynomial $\phi\ev{\lambda}$ in $\C[\lambda][\mathbf{a}\ev{j_1,\ldots,j_k}]$ such that the elimination ideal $J \cap \C[\lambda][\mathbf{a}\ev{j_1,\ldots,j_k}]$ that results after eliminating $x_1,\ldots,x_n$ is generated by $\phi\ev{\lambda}$ if $k$ is even, and by $\phi\ev{\lambda^2}$ if $k$ is odd. As shown in \cite[Corollary 3.1, p.~946]{CARTWRIGHT2013942}, the generic E-characteristic polynomial is an irreducible polynomial in the ring $\C[\lambda][\mathbf{a}\ev{j_1,\ldots,j_k}]$, of degree $\sum_{i=0}^{n-1}\ev{k-1}^{i} = (\ev{k-1}^{n}-1)/(k-2)$ for $k\geq 3$ and of degree $n$ for $k=2$. See \cite{github} for a Sagemath code calculating E-characteristic polynomials. 

For a tensor $\mathcal{A}$ with complex number entries, we define the E-characteristic polynomial $\phi_\mathcal{A}\ev{\lambda}$ as the polynomial obtained by making substitutions $ \mathbf{a}\ev{j_1,\ldots,j_k} = \mathcal{A}_{j_1,\ldots,j_k}$ in the generic E-characteristic polynomial $\phi\ev{\lambda}$. (Since elimination and substitution do not commute, eliminating $x_1,\ldots,x_n$ from the ideal generated by $(\mathcal{A}x - \lambda x)_{1\leq i \leq n}$ and $x^\top x - 1$ does not necessarily yield the same polynomial.) If $k=2$, then $\phi_{\mathcal{A}}(\lambda)$ is just the classical characteristic polynomial of the square matrix $\mathcal{A}$. 

There is an alternative approach for defining the E-characteristic polynomial of the $E$-eigenvalue equations in \cite{Q2007}, where the resultant of the homogenized system is proposed:
\begin{equation} 
\phi'_{\mathcal{A}}(\lambda) :=\begin{cases}
\operatorname{Res}_x ( \mathcal{A}x - \lambda(x^\top x)^{\frac{k-2}2} x), & k \text{ is even}, \\
\operatorname{Res}_{x,\beta} \left(\begin{smallmatrix}
\mathcal{A}x-\lambda\beta^{k-2}x \\
x^\top x -\beta^2
\end{smallmatrix}\right), & k \text{ is odd}, \\
\end{cases}
\end{equation}
A tensor $\mathcal{A}$ is \textit{irregular} if there is a non-zero vector $x$ such that $\mathcal{A}x = 0$ and $x^\top x = 0$, and is \textit{regular}, otherwise. (``Regular matrix'' has a different meaning in \Cref{sec:indecomp}, but the context makes it clear which meaning is intended.) In \cite[Theorem 4]{Q2007}, it was shown that the normalized $E$-eigenvalues of a regular tensor are exactly the roots of $\phi'_{\mathcal{A}}(\lambda)$. In other words, the resultant of the homogenized system detects normalized $E$-eigenvalues for a regular tensor. On the other hand, the resultant is zero if the tensor is irregular and its rank is at least $3$. In \Cref{thm:qi-regular} below, we present a complete characterization of the \textit{regular} hypergraphs $G$ of rank $k\geq 2$, i.e., those hypergraphs with a regular adjacency tensor $\mathcal{A}_G$. It turns out that a hypergraph of rank $k\geq 3$ is likely irregular, whereas a graph (of rank $2$) is likely regular, by \cite[Theorem 1.2, p.~270]{costello_vu} and the proposition below.

\begin{remark}
\label{rmk:induced_subgraph_zero}
Let $H$ be a $3$-uniform hypergraph and $G$ be an induced subgraph of $H$ with $|V\ev{G}| = m \leq n = |V\ev{H}|$. If $G$ is irregular, then $H$ is also irregular. 
\end{remark}
\begin{proof}
If $G$ is irregular, then there is some $x\in \C^m \setminus \{\mymathbb{0}_m \}$ such that $\mathcal{A}_G x = \mymathbb{0}_m$ and $x^\top x = \mymathbb{0}_m$. Then, the vector $x \oplus \mymathbb{0}_{n-m} \in \C^n \setminus {\mymathbb{0}_n} $ satisfies the same equations for $H$. 
\end{proof}

Define the hypergraphs
\begin{align*} 
& G_1 = \ev{ [4], \{ 123, 234\} } && \text{ (3-uniform diamond hypergraph) }\\
& G_2 = \ev{ [4], \{ 123, 234, 124 \} } && \text{ (3-uniform complete hypergraph minus an edge) }\\
& G_3 = \ev{ [5], \{ 123, 345 \} } && \text{ (3-uniform loose path of length 2) }
\end{align*}

\begin{lemma}
\label{lem:special_examples}
The hypergraphs $G_1,G_2,G_3$ are irregular. 
\end{lemma}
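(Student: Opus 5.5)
The plan is to exhibit, for each of $G_1,G_2,G_3$, an explicit nonzero vector $x\in\C^n$ with $\mathcal{A}_{G_j}x=0$ and $x^\top x=0$. By \Cref{rmk:symmetry_normalizing}, a constant factor in $\mathcal{A}_G$ does not affect whether $\mathcal{A}_Gx=0$. So for a $3$-uniform hypergraph I will use, by (\ref{eqn:tensor_times_vect}),
$$
(\mathcal{A}_Gx)_i \;\propto\; \sum_{\{i,a,b\}\in E(G)} x_a x_b .
$$
I will write down these $n$ quadratic equations for each hypergraph and look for solutions. The key idea is to set to zero one coordinate lying in most edges, so that most equations vanish automatically. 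The isotropy condition $x^\top x=0$ is then met with complex entries such as $1$ and $i$.

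For $G_1=([4],\{123,234\})$ the equations are $x_2x_3=0$, $x_3(x_1+x_4)=0$, $x_2(x_1+x_4)=0$ and $x_2x_3=0$. I take $x_2=x_3=0$, which kills all four equations. Then I choose $x=(1,0,0,i)$, which gives $x^\top x=1-1=0$.

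For $G_3=([5],\{123,345\})$ the equations are $x_2x_3=0$, $x_1x_3=0$, $x_1x_2+x_4x_5=0$, $x_3x_5=0$ and $x_3x_4=0$. Again I take $x=(1,0,0,i,0)$. Every product of two distinct coordinates vanishes, since only coordinates $1$ and $4$ are nonzero and they do not lie in a common edge. Also $x^\top x=0$.

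For $G_2=([4],\{123,234,124\})$ the first attempt fails. Vertices $1,2,4$ pairwise share edges, so two nonzero coordinates alone will not do. I therefore set $x_2=0$, which kills the equations at vertices $1$, $3$ and $4$, namely $x_2(x_3+x_4)$, $x_2(x_1+x_4)$ and $x_2(x_1+x_3)$. This leaves the system
$$
x_1x_3+x_3x_4+x_1x_4=0,\qquad x_1^2+x_3^2+x_4^2=0 .
$$
Normalizing $x_1=1$, $x_3=a$ and $x_4=b$, the first equation gives $b=-a/(1+a)$ provided $a\neq -1$. Substituting into the second gives the quartic $(1+a^2)(1+a)^2+a^2=0$. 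This quartic has a complex root, and $a=-1$ is not a root because the left side equals $1$ there. The resulting $x=(1,0,a,b)$ is nonzero and witnesses irregularity.

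This last case is the only step requiring any care, and the obstacle is mild: the argument just needs the algebraic closedness of $\C$ together with the check that the excluded value $a=-1$ is not a root. Everything else is direct verification.
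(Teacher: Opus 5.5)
Your proposal is correct and takes the same approach as the paper, which simply exhibits explicit nonzero vectors $x$ with $\mathcal{A}_G x = 0$ and $x^\top x = 0$. Your witness for $G_1$ is the paper's, and your $(1,0,0,i,0)$ for $G_3$ is a slightly simpler choice than the paper's $(1,1,0,i,i)$. For $G_2$, your quartic is exactly $(a^2+a+1)^2$, so $a$ is a primitive cube root of unity $\omega$ and your vector is $(1,0,\omega,\omega^2)$. Up to scaling this is the paper's explicit witness $(\omega^2,0,1,\omega)$, so noting the factorization would let you replace the existence argument with a concrete vector.
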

\begin{proof}
The nonzero vectors below satisfy $\mathcal{A}_G x = x^\top x = 0$:

For $G_1$, consider $x = [1 , 0 , 0 , i , 0 ]^\top$. Then,
\begin{equation*}
\label{eqn:homogenous_G1}
\mathcal{A}_{G_1} x = \begin{bmatrix}
x_2 x_3  \\
x_1 x_3 + x_3 x_4 \\
x_1 x_2 + x_2 x_4  \\
x_2 x_3 
\end{bmatrix} = \mymathbb{0}_4
\end{equation*}

For $G_2$, define $x = [(-1/2)(1+i\sqrt{3}) , 0 , 1 , (-1/2)(1-i\sqrt{3}) , 0 ]^\top $. Then,
\begin{equation*}
\label{eqn:homogenous_G2}
\mathcal{A}_{G_2} x = \begin{bmatrix}
x_2 x_3 + x_2 x_4  \\
x_1 x_3 + x_3 x_4 + x_1 x_4   \\
x_1 x_2 + x_2 x_4  \\
x_2 x_3 + x_1 x_2  
\end{bmatrix} = \mymathbb{0}_4
\end{equation*}

For $G_3$, let $x = [1 , 1 , 0 , i , i, 0]^\top $.
\begin{equation*}
\label{eqn:homogenous_G3}
\mathcal{A}_{G_3} x = \begin{bmatrix}
x_2 x_3  \\
x_1 x_3  \\
x_1 x_2 + x_4 x_5   \\
x_3 x_5   \\
x_3 x_4  
\end{bmatrix} = \mymathbb{0}_5
\end{equation*} 
\end{proof}

\begin{lemma}
\label{lem:connected_implies_complete}
If a connected $3$-uniform hypergraph $H$ does not contain any induced subgraph isomorphic to $G_i$, $i=1,2,3$, then it is complete. 
\end{lemma}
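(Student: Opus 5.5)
Suppose, for contradiction, that $H$ is connected, contains no induced copy of $G_1$, $G_2$ or $G_3$, and is not complete. The plan is to build one of these three forbidden configurations from a missing triple lying close to some edge. The case $|V\ev{H}|\le 3$ is trivial: connected with at least one edge means exactly one edge on three vertices, which is complete. So assume $|V\ev{H}|\geq 4$.

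The first step is a local closure property on $4$-sets. Every $4$-subset $S$ inducing at least two edges must induce all four. Indeed, any two distinct triples in a $4$-set share exactly two vertices. So two edges in $S$ form an induced $G_1$ if exactly two triples are present, and an induced $G_2$ if exactly three are present. Both are excluded, leaving only $K^{(3)}_4$.

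The second step propagates completeness. Suppose $e=\{a,b,c\}$ is an edge and $f$ is another edge with $|e\cap f|=2$. Then $e\cup f$ is a $4$-set with at least two edges, so it induces all four triples. Call two edges \emph{adjacent} if they share two vertices. We want that the vertex set $U$ covered by an adjacency-component of edges spans a complete hypergraph. Proceed inductively. If edges $\{a,b,c\}$ and $\{a,b,d\}$ lie in the component, the four triples on $\{a,b,c,d\}$ are present. Now take a vertex $w$ that reaches the component through an edge $\{x,y,w\}$ with $x,y$ already in the complete part. For any $z$ in the complete part, the edges $\{x,y,z\}$ and $\{x,y,w\}$ give, by the first step, the edges $\{x,z,w\}$ and $\{y,z,w\}$. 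Repeating this with $\{x,z,w\}$ and $\{x,z,z'\}$ yields $\{z,z',w\}$. Hence $U\cup\{w\}$ is complete.

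The third step, which I expect to be the main obstacle, handles edges meeting in exactly one vertex, where $G_3$ enters. Since $H$ is connected but not complete, some maximal complete set $U$ from the second step is not all of $V$. Connectivity then supplies an edge $f$ with $f\cap U\ne\emptyset$ and $f\not\subseteq U$.

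1. If $|f\cap U|=2$, then $f$ shares two vertices with an edge inside $U$, contradicting maximality by the second step.
2. If $|f\cap U|=1$, write $f=\{u,p,q\}$ with $p,q\notin U$. Pick an edge $\{u,a,b\}\subseteq U$; this is possible when $|U|\ge 3$, which holds since $U$ contains an edge. Consider the $5$-set $\{a,b,u,p,q\}$. Any edge there other than these two, and other than triples inside $U$, contains $p$ or $q$ together with two vertices sharing an edge, or links to $U$ in two vertices. Such an edge either shares two vertices with an edge of $U$ (contradicting maximality as in item 1) or has one of the forms $\{a,p,q\}$, $\{a,u,p\}$, etc., sharing two vertices with $f$.

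The delicate bookkeeping is this last alternative. If an edge shares two vertices with $f$, the first step forces a complete $4$-set containing $f$ and a vertex of $U$, which produces an edge meeting $U$ in two vertices, again a contradiction. So the induced hypergraph on $\{a,b,u,p,q\}$ is exactly $\{abu,upq\}\cong G_3$, a contradiction. Therefore $U=V$ and $H$ is complete.
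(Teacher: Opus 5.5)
Your proof is correct, but it is organized differently from the paper's. The paper works with the $2$-shadow $\Gamma$ of $H$. If $\Gamma$ is not complete, it takes a pair $x,y$ at distance $2$ with edges $xw_1z$ and $yw_2z$, and does one case analysis on the set $\{x,y,z,w_1,w_2\}$. Exactly two induced edges give $G_3$, and any further edge yields an induced $G_1$ or $G_2$ on a $4$-set. If $\Gamma$ is complete, the paper reruns this argument from a non-edge $xyz$.

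You instead separate the roles of the forbidden configurations. Excluding $G_1,G_2$ gives the closure property that every $4$-set induces $0$, $1$ or $4$ edges. From this, a maximal complete set $U$ containing an edge admits no edge meeting it in exactly two vertices. You use $G_3$ only once, to rule out an edge meeting $U$ in a single vertex, and connectivity then forces $U=V$. The paper's route is shorter and stays inside one $5$-vertex configuration, at the price of a compressed case analysis. Yours is longer but modular and explicit. It also gives a stronger structural fact: in a $\{G_1,G_2\}$-free $3$-uniform hypergraph, every tight component (edges linked by sharing two vertices) spans a complete hypergraph.

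One small tidy-up in your Case 2. An extra edge such as $\{a,u,p\}$ already meets $U$ in two vertices, so Case 1 excludes it directly. Only $\{a,p,q\}$ and $\{b,p,q\}$ need the first step, applied to the $4$-sets $\{u,a,p,q\}$ and $\{u,b,p,q\}$ respectively.
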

\begin{proof}
Suppose the $3$-uniform hypergraph $H$ is connected, not complete, and does not contain an induced copy of $G_i$, $i=1,2,3$.  Let $\Gamma$ be the $2$-shadow of $H$, i.e., $V(\Gamma)=V(H)$ and $E(\Gamma) = \{xy : \exists e \in E(H) \textrm{ with } x,y \in e\}$.  If $\Gamma$ is not complete, then there is some pair $x,y \in V(\Gamma)$ so that the $x-y$ distance in $\Gamma$ is $2$.  Thus, there is a vertex $z$ so that $xz, yz \in E(\Gamma)$ and so there exists a vertices $w_1,w_2$ so that $xw_1z, yw_2z \in E(H)$.  If $\{x,y,z,w_1,w_2\}$ induces only two edges, then it is a $G_3$, a contradiction.  Thus, there is another edge $e$ of $H$ induced by this set.  If $e=w_1w_2z$ or $e=xw_1w_2$, then $\{x,w_1,w_2,z\}$ induces a $K^{(3)}_4$, so we may take $e=xzw_2$; similarly, if $e=yw_1w_2$, we may take $e=yzw_1$.  Thus, we may assume $e=tw_jz$ for $j \in \{1,2\}$ and $t \in \{x,y\}$.  But then $\{x,w_j,z,y\}$ induces a $G_1$ or $G_2$, a contradiction.  So, $w_1=w_2$, but then $\{x,y,z,w_1\}$ induces a $G_1$ or $G_2$, a contradiction. 

If, on the other hand, $\Gamma$ is complete, let $xyz$ be any non-edge of $H$.  Then there exist $w_1$, $w_2$ with $xw_1z,zw_2y \in E(H)$, and the above argument applies to this situation as well.
\end{proof}

Before the main result of this section, note that the ``complete'' $3$-uniform hypergraph on $2$ vertices is disconnected, so the size of a connected and complete $3$-uniform hypergraph is in $\{ 1 \}\cup \{ 3, 4, \ldots \}$. 
\begin{lemma}
\label{lem:complete_implies_regular}
Let $G$ be a $3$-uniform connected and complete hypergraph of size $\geq 3$. If $\mathcal{A}_G x = 0$, then $x=0$. In particular, $G$ is regular. 
\end{lemma}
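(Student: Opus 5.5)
The plan is to make the equations $\mathcal{A}_G x = 0$ completely explicit, solve them, and then read off regularity. Let $n = |V(G)| \geq 3$. By (\ref{eqn:tensor_times_vect}) and completeness, $(\mathcal{A}_G x)_i = 2\sum_{\{j,l\} \subseteq [n]\setminus\{i\}} x_j x_l$ for each $i$. This is the second elementary symmetric function of $x$ with the coordinate $x_i$ deleted. Write $S = \sum_j x_j$ and $P = \sum_{j<l} x_j x_l$. Then the $i$-th equation becomes $P - x_i(S - x_i) = 0$, that is,
\begin{equation*}
x_i^2 - S x_i + P = 0 \qquad \text{for every } i \in [n].
\end{equation*}
So every coordinate of $x$ is a root of the single quadratic $t^2 - St + P$. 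Hence $x$ takes at most two distinct values.

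First I will treat the case where all coordinates equal some $a$. Then $S = na$ and $P = \binom{n}{2} a^2$, and the equation reads $a^2\bigl(1 - n + \binom{n}{2}\bigr) = a^2 (n-1)(n-2)/2 = 0$. Since $n \geq 3$, this forces $a = 0$.

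Next I will treat the case of two distinct values $a \neq b$, taken $p$ and $q$ times with $p + q = n$ and $p, q \geq 1$. Now $a, b$ are the two roots of the quadratic, so Vieta gives $a + b = S = pa + qb$, i.e.\ $(p-1)a + (q-1)b = 0$. It also gives $ab = P = \binom{p}{2}a^2 + \binom{q}{2}b^2 + pq\,ab$. I will eliminate $b$ (or $a$) using the linear relation and substitute into the quadratic relation. The goal is to show that the only surviving possibility is $\min(p,q) = 1$ with the repeated value equal to $0$.

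This case analysis is the step I expect to be the main obstacle. The point to handle carefully is that it does \emph{not} yield $x = 0$ outright. Vectors with exactly one nonzero coordinate, such as $x = \mymathbb{1}_{\{1\}}$, satisfy every equation, because each pair $\{j,l\}$ avoiding $i$ contains a zero coordinate. So the honest outcome of this step is the classification: $\mathcal{A}_G x = 0$ if and only if $x$ has at most one nonzero coordinate. The "$x = 0$" conclusion has to be extracted from this together with the normalization used in the definition of regularity.

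That normalization is exactly what the "in particular" clause needs. Suppose $x$ witnesses irregularity, so $\mathcal{A}_G x = 0$ and $x^\top x = 0$. By the classification, $x = c\, e_j$ for some $j$. Then $x^\top x = c^2 = 0$ gives $c = 0$, i.e.\ $x = 0$. So no nonzero witness exists and $G$ is regular.

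I would present the proof in this order: reduction to the common quadratic, the two value-cases, and finally the combination with $x^\top x = 0$. I would state clearly that the vanishing of $x$ holds for solutions that are also isotropic, since that is the form in which this lemma feeds into the characterization of regular hypergraphs.
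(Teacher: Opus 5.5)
Your proposal is correct, and the route differs from the paper's. You are right that the first sentence of the lemma is false as stated: $x=e_1$ is a nonzero null vector of $\mathcal{A}_G$. The paper's proof in fact assumes $\mathcal{A}_Gx=0$ and $x^\top x=0$ from the start, so it too only proves the isotropic version.

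The paper inducts on $n$. It checks $n=3$ by hand. It disposes of a zero coordinate by deleting that vertex, since the restricted vector is still an isotropic null vector of a smaller complete hypergraph. For nowhere-zero $x$ it compares the equations pairwise to conclude $x_1=\cdots=x_n$, and then $x^\top x=nx_1^2=0$.

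You avoid induction by noting that every coordinate is a root of $t^2-St+P$. This gives an unconditional description of all null vectors, and isotropy is used only in the last line. It also makes explicit a case that the paper's pairwise step passes over. Comparing the $i$-th and $j$-th equations only gives $(x_i-x_j)(S-x_i-x_j)=0$, and $x=(1,1,-1,-1)$ satisfies all of these relations without being constant. So two-valued vectors must be excluded separately: by your Vieta relations, or, in the isotropic setting, by combining $(p-1)a+(q-1)b=0$ with $pa^2+qb^2=0$.

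The elimination you deferred closes as you predict. For $p,q\ge 2$, write $a=(q-1)t$, $b=-(p-1)t$ with $t\neq 0$. Dividing $ab=P$ by $(p-1)(q-1)t^2$ gives $-1=-(p+q)/2$, i.e.\ $n=2$, a contradiction. For $p=1$ one has $q=n-1\ge 2$, so $(q-1)b=0$ forces $b=0$.

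One correction to your closing remark. In the proof of \Cref{thm:qi-regular}, the lemma is applied to the restrictions $y_i$ of $x$ to the components $H_i$. These need not be isotropic individually, so it is the unconditional (false) form that is invoked there, not the isotropic one. Your classification shows this gap is real. If $u,v$ lie in different components, $e_u+ie_v$ is a nonzero isotropic null vector. Hence a disjoint union of two or more complete components, or one complete component plus an isolated vertex, is irregular.
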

\begin{proof}
We apply induction on the number of vertices, $|V\ev{G}| = n\geq 3$. If $n=3$, then by (\ref{eqn:tensor_times_vect}), we have $\mathcal{A}_G x = [2! x_2x_3 , 2! x_1x_3, 2! x_1x_2]^\top $. So, if $\mathcal{A}_G x = 0$ and $x^\top x = 0$, then it follows that $x=0$. For the inductive step, fix $n\geq 4$. Assume, for a contradiction, that $x$ is a nonzero vector such that $\mathcal{A}_G x = x^\top x = 0$. If $x$ has a zero entry, then deleting that vertex yields a connected and complete irregular subgraph on $n-1$ vertices, contradicting the inductive hypothesis. So, every entry of $x$ is non-zero. Then, for each distinct pair $i,j$, we obtain 
$$
0 = x_i \ev{ \mathcal{A}_G x }_i - x_j \ev{ \mathcal{A}_G x }_j = \ev{ x_i - x_j } \sum_{i_3 \in [n] \setminus \{ i,j\} } 2! x_{i_3}  
$$
which implies $x_1=\ldots = x_n$. Combining with $x^\top x = 0$, we obtain $x=0$. 
\end{proof}

\begin{theorem} \label{thm:qi-regular}
Given $k\geq 2$ and a $k$-uniform hypergraph $H$, then $H$ is regular if and only if:
\begin{enumerate}
\item[\textit{i}] $H$ is graph of rank $2$ and has nullity $\leq 1$. 
\item[\textit{ii}] $H$ is a disjoint union of $3$-uniform connected and complete hypergraphs, with at most one isolated vertex. 
\end{enumerate} 
\end{theorem}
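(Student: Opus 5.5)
The proof splits by the rank $k$: the case $k\ge 4$ (where the statement implicitly claims no hypergraph is regular), the graph case $k=2$, and the main case $k=3$. In every case irregularity is witnessed by a nonzero isotropic vector $x$ ($x^\top x=0$) with $\mathcal{A}_H x=0$.

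\emph{The case $k\ge 4$.} Since neither \textit{i} nor \textit{ii} can hold for $k\ge 4$, I need every such $H$ with $n\ge 2$ vertices to be irregular. Take $x=e_1+\mathrm{i}\,e_2$, so $x^\top x=1-1=0$. By (\ref{eqn:tensor_times_vect}), each nonzero term of $(\mathcal{A}_H x)_j$ is a product $x_{i_2}\cdots x_{i_k}$ over $k-1\ge 3$ pairwise distinct indices. Such a product cannot be supported on $\{1,2\}$, so $\mathcal{A}_H x=0$. (I will note the degenerate one-vertex case separately.)

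\emph{The case $k=2$.} Here $\mathcal{A}_H$ is a real symmetric matrix, and regularity means its kernel contains no nonzero isotropic vector.
\begin{itemize}
\item If the nullity is at least $2$, pick real orthonormal $u,v$ in the kernel. Then $x=u+\mathrm{i}v$ is nonzero, lies in the kernel, and has $x^\top x=|u|^2-|v|^2+2\mathrm{i}\,u^\top v=0$, so $H$ is irregular.
\item If the nullity is $0$, the only kernel vector is $0$.
\item If the nullity is $1$, the complex kernel is $\C v$ for a real $v\neq 0$. Then $(cv)^\top(cv)=c^2|v|^2=0$ forces $c=0$.
\end{itemize}
So $H$ is regular exactly when its nullity is at most $1$, which is \textit{i}.

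\emph{The case $k=3$, necessity.} Suppose $H$ is regular. By \Cref{lem:special_examples} and the induced-subgraph argument of \Cref{rmk:induced_subgraph_zero}, $H$ contains no induced $G_1,G_2,G_3$. These are properties of induced subhypergraphs, so each connected component also avoids them, and \Cref{lem:connected_implies_complete} makes each component complete. As noted before \Cref{lem:complete_implies_regular}, each component then has size $1$ or at least $3$. If there were two isolated vertices $u,v$, then $x=e_u+\mathrm{i}e_v$ would satisfy $x^\top x=0$ and $\mathcal{A}_Hx=0$, since no edge meets $u$ or $v$. So there is at most one isolated vertex, which is \textit{ii}.

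\emph{The case $k=3$, sufficiency.} Suppose $H$ is as in \textit{ii}, and let $\mathcal{A}_Hx=0$ with $x^\top x=0$. Because edges lie inside components, the coordinates of $\mathcal{A}_H x$ indexed by a component $K$ equal $\mathcal{A}_K x|_K$.
\begin{enumerate}
\item For each component $K$ of size at least $3$, $\mathcal{A}_K x|_K=0$, so \Cref{lem:complete_implies_regular} (which only uses $\mathcal{A}x=0$) gives $x|_K=0$.
\item Hence $x$ is supported on the at most one isolated vertex $u$, so $x=c\,e_u$.
\item Then $x^\top x=c^2=0$ gives $c=0$, so $x=0$ and $H$ is regular.
\end{enumerate}

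The substantive combinatorial work is already in \Cref{lem:connected_implies_complete}. The steps needing the most care are the $k=2$ nullity argument, where one must pass from a complex to a real kernel basis, and being explicit that the statement implicitly asserts irregularity for all $k\ge 4$.
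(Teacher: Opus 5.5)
Your proposal follows the paper's proof essentially step for step: the same witnesses $e_u+\mathrm{i}e_v$, the same real-orthonormal-basis argument for $k=2$, and the same chain of lemmas for $k=3$. The $k\ge 4$ and $k=2$ parts are fine. The $k=3$ part fails at the same two places where the paper's proof fails. In fact the $k=3$ statement is false in both directions.

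\emph{Sufficiency.} Step 1 needs ``$\mathcal{A}_K y=0\Rightarrow y=0$'' for a complete component $K$. Your parenthetical that \Cref{lem:complete_implies_regular} ``only uses $\mathcal{A}x=0$'' is wrong: its base case and its induction both use $x^\top x=0$. Its claim that $\mathcal{A}_Gx=0$ forces $x=0$ is false as stated, since $\mathcal{A}_K e_v=0$ for every vertex $v$ (each term of $(\mathcal{A}_K y)_j$ is a product of two distinct coordinates). Componentwise you only know $\sum_K (x|_K)^\top x|_K=0$, not that each summand vanishes.

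This cannot be patched. Take $H$ with edges $\{1,2,3\},\{4,5,6\}$, or $K^{(3)}_3$ plus an isolated vertex $4$. Then $x=e_1+\mathrm{i}e_4$ has $x^\top x=0$ and $\mathcal{A}_Hx=0$, because no edge contains both $1$ and $4$. More generally, your own isolated-vertex trick shows that any two vertices not lying in a common edge witness irregularity. So a regular $3$-graph on at least $2$ vertices has complete $2$-shadow, and hence is connected with no isolated vertex.

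\emph{Necessity.} The claim that $H$ has no induced $G_1,G_2,G_3$ rests on \Cref{rmk:induced_subgraph_zero}. Its proof checks only the coordinates inside the induced subgraph. For an outside vertex $j$, $(\mathcal{A}_H(x\oplus 0))_j$ is a multiple of $\sum x_ax_b$ over pairs $\{a,b\}\subseteq V(G)$ with $\{j,a,b\}\in E(H)$, and this need not vanish.

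The conclusion genuinely fails. Take the Fano plane with lines $123,345,367,146,157,247,256$. The set $\{1,\dots,5\}$ induces $G_3$, but the extended witness $x=e_1+e_2+\mathrm{i}(e_4+e_5)$ gives $(\mathcal{A}x)_6$ equal to a nonzero multiple of $x_1x_4+x_2x_5=2\mathrm{i}$. Moreover, a case analysis on the number of vanishing coordinates shows that the only nonzero solutions of $\mathcal{A}x=0$ are, up to scaling:
\begin{itemize}
\item the coordinate vectors $e_v$;
\item the $\pm1$-vectors supported on the complement of a line with an odd number of minus signs.
\end{itemize}
To check this: if two coordinates vanish, then $x$ is a multiple of some $e_v$ or lives on an oval, where the system becomes $ab+cd=ac+bd=ad+bc=0$. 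One vanishing coordinate, or none, leads to a contradiction after elimination.

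None of these solutions is isotropic. So the Fano plane is regular, connected, and not complete. For $k=3$, what survives is that $K^{(3)}_n$ ($n\ge 3$) is regular and that regularity forces a complete $2$-shadow. The theorem as stated cannot be proved.
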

\begin{proof}
If $k \geq 4$ then $x = [1,i,0,\ldots,0]^\top$ witnesses the irregularity of $H$. Hence, we only need to consider $k\in \{2,3\}$.

First, assume that $k=2$. As $\mathcal{A}_H$ is real and symmetric matrix, its null-space has a real orthonormal basis, $\{y_1,\ldots,y_\ell\}$, where $\ell \geq 0$ is the nullity. If the nullity is zero, then there is no non-zero $x$ such that $\mathcal{A}_H x = 0$, therefore $H$ is regular. Next, assume that the nullity is $1$. For any null-vector $x$, there is some $a\in \C \setminus \{0\}$ such that $x = a\cdot y_1$. It follows that $x^\top x = a^2 y_1^\top y_1 = a^2 \neq 0 $, so $H$ must be regular. Finally, assume that $\ell \geq 2$. Consider the complex vector $x := y_1 + i y_2$. It follows that $\mathcal{A}_H x = x^\top x = 0$, as $y_1$ and $y_2$ are orthonormal. As $y_1$ and $y_2$ are non-zero and have real coordinates, we also have $x \neq 0$, which proves that $H$ is irregular.

Next, consider $k = 3$. Assume that $H$ is regular. By \Cref{lem:special_examples} and \Cref{rmk:induced_subgraph_zero} Part 2, $H$ does not contain any subgraph isomorphic to $G_i$, for $i=1,2,3$. Then, by \Cref{lem:connected_implies_complete}, we infer that each connected component of $H$ is complete. If $H$ has more than one isolated vertices, say $v_1,v_2$, then $x = [1,i,0,\ldots,0]^\top$ shows irregularity, as in the case $k\geq 4$.

Conversely, assume that $H = H_1 \cup \ldots \cup H_m$ is a disjoint union of $3$-uniform connected complete hypergraphs with at most one isolated vertex. Let $x$ be a vector such that $\mathcal{A}_H x = x^\top x = 0$. Let $y_i$ be the vector where the entries of $y_i$ agree with those of $x$ corresponding to vertices of $H_i$, and so $x = y_1 \oplus \ldots \oplus y_m$ and $\mathcal{A}_{H_i} y_i = 0$. 

\textbf{Case 1:} If $H$ has no isolated vertex, then it follows by \Cref{lem:complete_implies_regular} that $y_i=0$, for each $i$, which implies $x=0$. 

\textbf{Case 2:} If $H$ has an isolated vertex, say $v_1$, then $y_i = 0$ for each $i = 2,3,\ldots, m$, as in Case 1. Then, we have $0 = x^\top x = y_1^2$, which implies $y_1=0$ and so, $x=0$. 
\end{proof}

\section{Future Directions}

Here, we mention a few open problems which arose in the context of the present discussion.

\begin{enumerate}
\item It was noted in \Cref{prop:exhaustive_data} that the equation $Q^\top \mathcal{A}_G Q = \mathcal{A}_H$ is satisfied for $Q = R_f$, by the $3$-uniform Fano planes $G$ and $H$. Are there non-isomorphic hypergraphs $G, H$ of rank $k\geq 3$ and size $|Q|$ (defined only on the ``switching set'') that satisfy this equation for one of the indecomposable regular orthogonal matrices $Q$ in \Cref{def:gm_and_wqh_matrices} or \Cref{def:all_switching_matrices}?
\item Recall that $\mathcal{I}^k_n$ denotes the identity tensor of order $k$ and dimension $n$. It is known that (\cite[Theorem 2.1, p.~2355]{shao}) if $Q^\top \mathcal{A}_G Q = \mathcal{A}_H$ and $Q^\top \mathcal{I}^k_n Q = \mathcal{I}^k_n$ for some square matrix $Q$, then the uniform hypergraphs $G$ and $H$ are cospectral in the sense of \cite{C2012} (and also $E$-cospectral). The converse is true for graphs, i.e., $k=2$, but is it true for $k > 2$?  More generally, can one use switching to obtain cospectral hypergraph pairs?
\end{enumerate}


\subsection*{Acknowledgements}
\raggedright
Aida Abiad is supported by the Dutch Research Council (NWO) through the grant VI.Vidi.213.085. 

\printbibliography 
\end{document}